\newcommand{\ignore}[1]{}
\newcommand{\supp}{\operatorname{supp}}
\newcommand{\bb}{\mathbb}
\newcommand{\C}{\bb C} 
\newcommand{\Z}{\bb Z}
\newcommand{\height}{\textrm{\rm H}}
\newcommand{\R}{\bb R}
\newcommand{\N}{\bb N}
\newcommand{\Q}{\mathbb Q}
\newcommand{\cH}{\mathcal{H}}
\newcommand{\J}{Jarn\'{i}k}
\newtheorem{Theorem}{Theorem}
\newtheorem{Prop}[Theorem]{Proposition}
\newtheorem{Lemma}[Theorem]{Lemma}
\newtheorem*{lemma*}{Lemma}
\newtheorem{example}[Theorem]{Example}
\newtheorem*{theorem*}{Theorem}
\newtheorem{Def}[Theorem]{Definition}
\numberwithin{equation}{section}
\numberwithin{Theorem}{section}
\begin{document}
\title[Diophantine approximation]{Metric Diophantine approximation on homogeneous varieties}
\author{Anish Ghosh, Alexander Gorodnik, and Amos Nevo} 
\address{School of Mathematics, University of East Anglia, Norwich, UK }
\email{a.ghosh@uea.ac.uk}
\address{School of Mathematics, University of Bristol, Bristol UK }
\email{a.gorodnik@bristol.ac.uk}
\address{Department of Mathematics, Technion IIT, Israel}
\email{anevo@tx.technion.ac.il}

\date{\today}
\subjclass[2000]{37A17, 11K60}
\keywords{Diophantine approximation, Khintchine theorem, \J~theorem, semisimple algebraic groups, 
homogeneous varieties, automorphic spectrum}

\begin{abstract}
We develop the metric theory of Diophantine approximation on homogeneous varieties
of semisimple algebraic groups and prove results analogous to the classical Khinchin and Jarnik theorems.
In full generality our results establish  simultaneous Diophantine approximation with respect
to several completions, and Diophantine approximation over general number fields using 
$S$-algebraic integers. In several important examples, the metric results we obtain are optimal. 
The proof uses quantitative equidistribution properties of suitable averaging operators, which are derived from 
 spectral bounds in automorphic representations.
\end{abstract}

\maketitle


\section{Introduction}

In the classical theory of Diophantine approximation one studies
approximations of vectors $x\in \mathbb{R}^d$ by rational vectors. 
Beginning with works of Khintchine in the 1920s, there has been a rich literature investigating
the size of the sets of vectors in $\R^d$ satisfying various approximation properties \cite{sp,h,BD}.
To set the scene we recall two fundamental
results in this subject --- the Khintchine and Jarn\'ik theorems.

Let us fix a nonincreasing function
$\psi:\mathbb{R}^+\to (0,1]$ which will measure the quality of rational approximations.
A vector $x\in\mathbb{R}^d$ is called {\it $\psi$-approximable} if 
there exist infinitely many $(p,q) \in \Z^d\times \N$ such that
$$
\left\|x - \frac{p}{q}\right\| \le \frac{\psi(q)}{q}.
$$
Let $\mathcal{W}(\R^d,\Q^d,\psi)$ denote the set of $\psi$-approximable vectors in $\mathbb{R}^d$.
Since $\mathcal{W}(\R^d,\Q^d,\psi)$ is the  $\limsup$ set with respect to the balls 
centered at $\frac{p}{q}$ and radius $\frac{\psi(q)}{q}$, a straightforward Borel--Cantelli
argument implies that the set $\mathcal{W}(\R^d,\Q^d,\psi)$ has Lebesgue measure zero provided that 
\begin{equation}\label{eq:cond1}
\sum_{(p,q) \in \Z^d\times \N, \frac{p}{q}\in [0,1)^d} \left(\frac{\psi(q)}{q}\right)^d
=\sum_{q\ge 1} \psi(q)^d<\infty.
\end{equation}
The converse is a fundamental theorem of Khintchine \cite{Khintchine}:

\begin{Theorem}[Khintchine]\label{Khintchine}
If
$$
\sum_{q\ge 1} \psi(q)^d=\infty,
$$
then the set $\mathcal{W}(\R^d,\Q^d,\psi)$ has full Lebesgue measure.
\end{Theorem}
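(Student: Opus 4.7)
The plan is to prove full-measure by combining the standard divergence Borel--Cantelli lemma with a quasi-independence estimate for the natural $\limsup$ events, and then upgrading positive measure to full measure via a zero-one law. By $\Z^d$-periodicity of $\mathcal{W}(\R^d,\Q^d,\psi)$, it suffices to work on the torus $\mathbb{T}^d=[0,1)^d$. For each $q\ge 1$ set
\[
A_q = \bigcup_{\substack{p\in\Z^d\\ p/q\in[0,1)^d}} B\!\left(\tfrac{p}{q},\tfrac{\psi(q)}{q}\right)\cap[0,1)^d,
\]
so that $\mathcal{W}(\R^d,\Q^d,\psi)\cap[0,1)^d = \limsup_q A_q$. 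Since $\psi(q)\le 1$ the balls centered at distinct $p/q$ with $0\le p<q$ are disjoint, and a direct count gives $|A_q| = c_d\,\psi(q)^d$ for a constant $c_d>0$. The divergence hypothesis thus becomes $\sum_q |A_q|=\infty$, which is the input needed for the divergent side of Borel--Cantelli.

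The next step, and the main technical obstacle, is to establish pairwise quasi-independence of the form
\[
|A_q\cap A_{q'}|\ \le\ C\,|A_q|\,|A_{q'}|\qquad\text{for most pairs } q\ne q'.
\]
To see why one expects this, note that for distinct fractions $p/q\ne p'/q'$ one has the separation $\|p/q-p'/q'\|\ge 1/(qq')$, while the balls in $A_q$, $A_{q'}$ have radii $\psi(q)/q$ and $\psi(q')/q'$; so two such balls can only meet when $\psi(q)\psi(q')\gtrsim 1$, which, using the monotonicity of $\psi$, constrains how often overlaps occur. A careful count, organizing pairs $(p,q),(p',q')$ by their least common denominator and using monotonicity of $\psi$ to trade contributions between different scales of $q$, yields the required estimate (up to logarithmic losses that are handled by dyadic blocking of the $q$ variable). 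Feeding this into the Chung--Erd\H{o}s lemma
\[
\Big|\limsup_q A_q\Big|\ \ge\ \limsup_{N\to\infty}\frac{\bigl(\sum_{q\le N}|A_q|\bigr)^2}{\sum_{q,q'\le N}|A_q\cap A_{q'}|}
\]
gives $|\limsup_q A_q|>0$.

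Finally, to upgrade positive measure to full measure, I would invoke a zero-one law: the set $\mathcal{W}(\R^d,\Q^d,\psi)$ is invariant under the $\Q^d$-translation action on $\mathbb{T}^d$, which is ergodic with respect to Lebesgue measure, so any measurable invariant subset has measure $0$ or $1$. Since we have already proved it has positive measure, it must have full measure, which is the desired conclusion.

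The delicate part is genuinely the quasi-independence bound: without monotonicity the analog is a deep theorem (the Duffin--Schaeffer conjecture, in dimension one), but the monotonicity assumption on $\psi$ lets us reduce the count of close pairs to an elementary lattice-point estimate, so the main effort is in the bookkeeping rather than in any new idea.
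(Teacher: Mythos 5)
The paper does not actually prove Khintchine's theorem: it states it as classical background and cites Khintchine's original 1924 paper, so there is no ``paper proof'' to compare against. Your outline (work on the torus, prove positive measure via divergence Borel--Cantelli together with quasi--independence, then upgrade to full measure by a zero--one law) is indeed the standard classical route, but two of the three steps as you have written them contain genuine errors.

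First, the quasi--independence step is not just ``bookkeeping.'' The heuristic you give is wrong: two balls of radii $\psi(q)/q$ and $\psi(q')/q'$ whose centres are at distance $\ge 1/(qq')$ intersect when $q'\psi(q)+q\psi(q') \gtrsim 1$, which is much weaker than $\psi(q)\psi(q')\gtrsim 1$ (one small radius can be compensated by the other being large). More seriously, with $A_q$ defined over \emph{all} $p$ rather than reduced fractions, the pointwise bound $|A_q\cap A_{q'}|\le C|A_q||A_{q'}|$ is simply false for $q'=kq$: the $q^d$ shared centres $p/q = kp/(kq)$ contribute measure on the order of $\psi(q')^d/k^d$ to the intersection, which dominates $\psi(q)^d\psi(q')^d$ when $\psi(q)$ is small. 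Whether the Chung--Erd\H{o}s denominator $\sum_{q,q'\le N}|A_q\cap A_{q'}|$ is still $\ll\bigl(\sum_{q\le N}|A_q|\bigr)^2$ requires a real argument (the usual fix is to pass to reduced fractions, which changes the $\limsup$ set only by a null set, and then count using $\gcd(q,q')$ carefully); none of that is present, and it is precisely where all the work in a complete proof lies.

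Second, the zero--one law you invoke does not exist in the form stated. The set $\mathcal{W}(\R^d,\Q^d,\psi)$ is \emph{not} invariant under translation by $\Q^d$ on the torus. Translating $x$ by $a/b$ carries an approximation $\|x-p/q\|\le \psi(q)/q$ to $\|x+a/b-(pb+aq)/(qb)\|\le\psi(q)/q$, a ball of radius $\psi(q)/q$ around a rational with denominator (dividing) $qb$; to conclude that $x+a/b$ is $\psi$--approximable one would need $\psi(q)/q\le\psi(qb)/(qb)$, i.e. $b\,\psi(q)\le\psi(qb)$, which fails for every $b>1$ since $\psi$ is nonincreasing. So ergodicity of the $\Q^d$-translation action is not applicable. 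The zero--one law that actually makes the last step work is Cassels' ($d=1$) or Gallagher's (general $d$) theorem, which is itself a nontrivial result proved by a dilation/Lebesgue-density argument, not by invariance under a group acting ergodically. As written, this step is not a minor gap but an incorrect justification.
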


If we strengthen \eqref{eq:cond1} to require that 
$$
\sum_{(p,q) \in \Z^d\times \N, \frac{p}{q}\in [0,1)^d} \left(\frac{\psi(q)}{q}\right)^\alpha=
\sum_{q\ge 1} q^{d-\alpha}\psi(q)^{\alpha}<\infty
$$
for some $\alpha\in (0,d)$,
then it easily follows that the Hausdorff dimension of the set $\mathcal{W}(\R^d,\Q^d,\psi)$
is at most $\alpha$. The converse has been established by Jarn\'ik \cite{Jarnik}:

\begin{Theorem}[Jarn\'ik]\label{Jarnik}
If for some $\alpha\in (0,d)$,
$$
\sum_{q\ge 1} q^{d-\alpha}\psi(q)^{\alpha}=\infty,
$$
then the intersection of $\mathcal{W}(\R^d,\Q^d,\psi)$ 
with every nonempty open subset of $\R^d$ has Hausdorff dimension at least $\alpha$.
\end{Theorem}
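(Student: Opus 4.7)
The plan is to give a lower bound on $\dim_H(\mathcal{W}(\R^d,\Q^d,\psi)\cap U)$ for an arbitrary nonempty open $U\subset\R^d$ by constructing a Cantor-type subset $K\subset \mathcal{W}(\R^d,\Q^d,\psi)\cap U$ that supports a probability measure $\mu$ satisfying the Frostman condition $\mu(B(x,r))\lesssim r^\alpha$ for all small $r$; the mass distribution principle then yields $\dim_H K\ge \alpha$, hence the theorem. Equivalently, one can deduce the statement from Theorem~\ref{Khintchine} via the Beresnevich--Velani mass transference principle: applied with dimension function $f(r)=r^\alpha$ it reduces the Jarn\'ik divergence case to the Khintchine theorem for the auxiliary approximation function $\tilde\psi(q):=\min\{q^{1-\alpha/d}\psi(q)^{\alpha/d},\,1\}$, whose $d$-volume sum $\sum_q\tilde\psi(q)^d$ agrees with $\sum_q q^{d-\alpha}\psi(q)^\alpha$ up to a finite remainder.

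For the direct construction, rewrite the hypothesis as $\sum_q q^d(\psi(q)/q)^\alpha=\infty$ and note that, since there are $\asymp q^d \vol(U)$ admissible fractions $p/q\in U$ for each fixed $q$, the total $\alpha$-volume of the approximation balls $B_{p,q}:=B(p/q,\psi(q)/q)$ meeting $U$ diverges. Partition denominators into dyadic blocks $Q_k=[2^k,2^{k+1})$ and extract an infinite subsequence $k_1<k_2<\cdots$ along which the $\alpha$-volume sum restricted to $q\in Q_{k_j}$ is bounded below by a positive constant times $\vol(U)$, growing fast enough that stage-$(j{+}1)$ radii are much smaller than stage-$j$ radii. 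Inside every parent ball $P$ chosen at stage $j-1$, shrink each $B_{p,q}$ with $q\in Q_{k_j}$ and $p/q\in P$ by a uniform constant and extract a maximal disjoint subfamily $\mathcal F_j(P)$; a Vitali-type covering argument shows that the surviving $\alpha$-mass inside $P$ remains comparable to $r(P)^\alpha$. Set $K=\bigcap_j\bigcup_{B\in\mathcal F_j}B$ and define $\mu$ on $K$ inductively by giving each child a share of its parent's mass proportional to its $\alpha$-volume among its siblings. By construction $K\subset\mathcal{W}(\R^d,\Q^d,\psi)\cap U$.

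The main obstacle is verifying the Frostman bound $\mu(B(x,r))\lesssim r^\alpha$ uniformly in $r$. For $r$ lying between the radius scales of two consecutive stages, one has to control the number of stage-$j$ balls that $B(x,r)$ can meet, using a packing argument based on the spacing $|p/q-p'/q'|\ge 4^{-k_j-1}$ between distinct rationals with denominators in $Q_{k_j}$, and then multiply by the maximal per-ball mass assigned at that level. Calibrating the growth of $(k_j)$, the shrinkage factor, and the way mass is distributed among siblings so that both the intermediate-scale and in-stage estimates collapse to $r^\alpha$ is the delicate balance that drives the entire argument. Once this is in place, the mass distribution principle gives $\dim_H K\ge\alpha$, and arbitrariness of $U$ delivers the theorem.
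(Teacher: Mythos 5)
The paper does not prove the classical Jarn\'ik theorem; it is cited to Jarn\'ik 1929, while the paper's own contribution is the homogeneous-variety analogue Theorem~\ref{th:main3}, which it proves by precisely the second route you sketch: apply the Beresnevich--Velani Mass Transference Principle (Theorem~\ref{th:BV}), reducing the Hausdorff-measure statement to a Lebesgue full-measure statement supplied by the Khintchine-type Theorem~\ref{th:main2}. Your mass-transference reduction is therefore the ``same approach as the paper.'' The radius computation is right: with $r_{p,q}=\psi(q)/q$ and target dimension $s=\alpha$, the MTP hypothesis concerns $\limsup B(p/q,(\psi(q)/q)^{\alpha/d})$, whose divergence sum is $\sum_q q^d\bigl((\psi(q)/q)^{\alpha/d}\bigr)^d=\sum_q q^{d-\alpha}\psi(q)^\alpha$, as you say. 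The one point you gloss over is monotonicity: the auxiliary function $\tilde\psi(q)=q^{1-\alpha/d}\psi(q)^{\alpha/d}$ need \emph{not} be nonincreasing even when $\psi$ is (e.g.\ if $\psi$ decays slowly), so you cannot invoke Theorem~\ref{Khintchine} verbatim. What \emph{is} monotone is the ball radius $\tilde\psi(q)/q=(\psi(q)/q)^{\alpha/d}$, and you should cite the form of Khintchine's divergence theorem whose hypothesis is monotonicity of the error/radius function rather than of $\psi$ itself (or, for $d\ge 2$, Gallagher's version, which requires no monotonicity at all); with that in hand the reduction closes cleanly. Note also that the paper's own analogue avoids this issue entirely because its approximation balls have radius $\psi(\height(z))$ rather than $\psi(\height(z))/\height(z)$, so the rescaled function $\psi^{s/d}$ there stays monotone automatically. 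Your first paragraph's direct Cantor-set construction is the classical Jarn\'ik/Besicovitch route and is a genuinely different (and considerably more laborious) argument; it is only a sketch here with the key Frostman estimate deferred, so if you mean to submit it rather than the MTP deduction you would need to carry out the packing/spacing bookkeeping in full.
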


\vspace{0.2cm}

The aim of this paper is to develop the metric theory of Diophantine approximation 
on homogeneous varieties of semisimple algebraic groups. In fact, we consider, more generally, 
\begin{itemize}
\item the problem of simultaneous Diophantine approximation with respect to several completions,
\item the problem of Diophantine approximation over number fields by $S$-algebraic integers.
\end{itemize}
Although the question of proving a Khintchine-type theorem for homogeneous varieties
was raised already half a century ago by S. Lang \cite[p.~189]{Lang}, 
it has remained widely open. We are only aware of results for rational quadrics \cite{dr}.
It should be noted that this problem is very
different from problems arising in the theory of Diophantine approximation with dependent quantities, also traditionally called ``Diophantine approximation on manifolds''
(see for instance, \cite{Kleinbock, BD}).
In the latter subject, one studies rational approximation of points on varieties by
{\it all} rational points in $\R^d$, while we are interested in approximation
by rational points lying on the variety.

\vspace{0.2cm}

Let ${\rm X}$ be a nonsingular subvariety of the affine space $A^n$ defined over a number field $K$.
We denote by $V_K$ the set of normalized absolute values $|\cdot|_v$ of $K$ and by $K_v$
the corresponding completions.  We introduce a metric on ${\rm X}(K_v)$:
\begin{equation}\label{eq:dist}
\|x-y\|_v:=\max_{1\le i\le n} |x_i-y_i|_v,
\end{equation}
and the height function on ${\rm X}(K)$:
\begin{equation}\label{eq:height}
\height(x):=\prod_{v\in V_K} \max (1, \|x\|_v).
\end{equation}
For $S \subset V_K$, we set
$$
X_S := \{ (x_v)_{v \in S}~:~x_v \in {\rm X}(K_v);\; \|x_v\|_v\le 1~\text{for almost all}~v\}.
$$
The metric on $X_S$ is defined as the maximum of the local metrics \eqref{eq:dist}.
We fix non-vanishing regular differential form on ${\rm X}$ of top degree. This defines measures $\lambda_v$
on ${\rm X}(K_v)$ and a measure $\lambda_S$ on $X_S$, which is product of the measures on ${\rm X}(K_v)$
normalized so that the subsets $\{\|x_v\|_v\le 1\}$ have measure one.
Different choices of differential forms give equivalent measures.

We are interested in Diophantine approximation in $X_S$ by elements of
${\rm X}(K)$, which are embedded in $X_S$ diagonally.
Let $\Psi=(\psi_v)_{v\in S}$ be a collection of nonincreasing functions $\psi_v:\R^+\to (0,1]$
such that $\psi_v=1$ for almost all $v$. 

\begin{Def}\label{W} We say that a point $x=(x_v)_{v\in S}\in X_S$ is {\it $\Psi$-approximable}
if there are infinitely many $z\in {\rm X}(K)$ such that
\begin{equation}\label{eq:ball}
\|x_v-z\|_v\le \psi_v(\height(z)),\quad v\in S.
\end{equation}
We denote by $\mathcal{W}(X_S, {\rm X}(K),\Psi)$ the set of $\Psi$-approximable points in $X_S$.
\end{Def}
Let
$$
\psi_{S}(t) := \prod_{v \in S}\psi_{v}(t)^{r_v},
$$
with $r_v=2$ if $K_v=\C$ and $r_v=1$ otherwise.
Since ${\rm X}$ is nonsingular, computing volumes with respect to
the measures $\lambda_v$ in local coordinates yields
$$
\lambda_S\left(\{x\in X_S:\, \hbox{~(\ref{eq:ball}) holds}\}\right)\ll_z \psi_{S}(\height(z))^{\dim({\rm X})},
$$
where the implied constant is uniform as $z$ varies in compact sets.
Therefore, the standard Borel--Cantelli argument implies that if 
\begin{equation}\label{eq:el1}
\sum_{z \in {\rm X}(K)\cap D} \psi_{S}(\height(z))^{\dim({\rm X})} < \infty
\end{equation}
for all bounded subsets $D$ of $X_S$, 
then the set $\mathcal{W}(X_S, {\rm X}(K),\Psi)$ has measure zero.

Similarly, one can obtain an elementary estimate on the Hausdorff dimension of the set
$\mathcal{W}(X_S, {\rm X}(K),\Psi)$. Let us assume that $S$ is finite, and  all functions $\psi_v$ are equal.
It follows from the definition of the Hausdorff dimension 
(see Section \ref{sec:h} below) that if with $\alpha>0$,
\begin{equation}\label{eq:el2}
\sum_{z \in {\rm X}(K)\cap D} \psi_v(\height(z))^{\alpha} < \infty
\end{equation}
for all bounded subsets $D$ of $X_S$, 
then the set $\mathcal{W}(X_S, {\rm X}(K),\Psi)$ has Hausdorff dimension at most $\alpha$.

We prove partial converses of these statements in the setting of homogeneous
varieties of simple algebraic groups. Our main results which are optimal in many cases, are illustrated by the following example:

\begin{example}\label{example} 
{\rm
Let 
$$
{\rm X}=\{Q(x)=a\}
$$
be a rational 2-dimensional ellipsoid. We fix a finite prime $p$,
a finite set of primes $p_1,\ldots,p_s$ (possibly including $\infty$)
different from $p$, and $\psi:\R^+\to (0,1]$ a
nonincreasing function. We consider the problem of Diophantine approximation
in ${\rm X}(\Q_{p_1})\times \cdots \times {\rm X}(\Q_{p_s})$ by points in ${\rm X}(\Z[1/p])$.
In this setting, we have:
\begin{enumerate}
\item[(i)] If there exists a bounded subset $D$ of ${\rm X}(\Q_{p_1})\times \cdots \times {\rm
    X}(\Q_{p_s})$ and $\epsilon>0$ such that
$$
\sum_{z \in {\rm X}(\Z[1/p])\cap D} \psi(\height (z))^{2s+\epsilon} = \infty,
$$
then for almost every $(x_1,\ldots,x_s)\in {\rm X}(\Q_{p_1})\times \cdots \times {\rm X}(\Q_{p_s})$,
the system of inequalities
$$
\|x_i-z\|_{p_i}\le\psi(\height(z)),\quad i=1,\ldots, s,
$$
has infinitely many solutions $z\in {\rm X}(\Z[1/p])$.

In view of the above discussion, this result is optimal (up to $\epsilon>0$).
Namely, as we saw in (\ref{eq:el1}), if 
$$
\sum_{z \in {\rm X}(\Z[1/p])\cap D} \psi(\height (z))^{2s} < \infty
$$
for all bounded subset $D$, then the set of such $(x_1,\ldots,x_s)$ has measure zero.

\item[(ii)] If there exists a bounded subset $D$ of ${\rm X}(\Q_{p_1})\times \cdots \times {\rm
    X}(\Q_{p_s})$ and $\alpha\in (0,2s)$ such that
$$
\sum_{z \in {\rm X}(\Z[1/p])\cap D} \psi(\height(z))^{\alpha} = \infty,
$$
then the set of  $(x_1,\ldots,x_s)\in {\rm X}(\Q_{p_1})\times \cdots \times {\rm X}(\Q_{p_s})$
such that the system of inequalities
$$
\|x_i-z\|_{p_i}\le \psi(\height(z)),\quad i=1,\ldots, s,
$$
has infinitely many solutions $z\in {\rm X}(\Z[1/p])$ has Hausdorff dimension at least $\alpha$.

This result is optimal. Indeed, according to (\ref{eq:el2}), if 
$$
\sum_{z \in {\rm X}(\Z[1/p])\cap D} \psi(\height(z))^{\alpha} < \infty
$$
for all bounded subset $D$, then the set of such 
$(x_1,\ldots,x_s)$ has Hausdorff dimension at most $\alpha$.
\end{enumerate}
}\end{example}
 
Our first theorem concerns Diophantine approximation on a
semisimple algebraic group ${\rm G}$. Let $G_{V_K}$ denote the restricted direct product of $G_v$, $v\in V_K$. 
The group of $K$-rational points ${\rm G}(K)$ embeds diagonally in the locally compact group ${G}_{V_K}$ as a discrete
subgroup with finite covolume. A continuous unitary character $\chi$ of ${ G}_{V_K}$ is called automorphic if 
$\chi({\rm G}(K))=1$. Then $\chi$ can be considered as an element of $L^2({ G}_{V_K}/{\rm G}(K))$.
We denote by $L_{00}^2({ G}_{V_K}/{\rm G}(K))$, the subspace of $L^2({ G}_{V_K}/{\rm G}(K))$ orthogonal
to all automorphic characters. The translation action of the group ${ G}_{v}$ on ${ G}_{V_K}/{\rm G}(K)$
defines a unitary representation $\pi_v$ of ${ G}_{v}$ on $L_{00}^2({ G}_{V_K}/{\rm G}(K))$. Fix a
suitable maximal compact subgroup $U_v$ of $G_v$ (as in \cite[\S3]{GGN1}). 
We define the spherical {\it integrability exponent} of $\pi_v$ with respect to $U_v$ by
\begin{equation}\label{eq:q_v}
\mathfrak{q}_v({\rm G}):=\inf\left\{ q>0:\, \begin{tabular}{l}
$\forall$\hbox{ $U_v$-inv. $w\in L_{00}^2({ G}_{V_K}/{\rm G}(K))$}\\
\hbox{$\left< \pi_v(g)w,w\right>\in L^q({\rm G}(K_v))$}
\end{tabular}
\right\}.
\end{equation}
We note that the Langlands programme provides explicit bounds and conjectures
regarding the values of the integrability exponents (see \cite{Sarnak} for a comprehensive discussion).
In particular, it is known that the integrability exponents $\mathfrak{q}_v({\rm G})$ are always finite
(see \cite{Clozel}),
the Ramanujan--Petersson conjecture predicts that $\mathfrak{q}_v({\rm SL}_2)=2$ for all $v$,
and it is known that $\mathfrak{q}_v({\rm SL}_2)=2$ for a positive proportion of $v$'s (see \cite{ram}).
In the setting of Example \ref{example}, it is also known that $\mathfrak{q}_v({\rm G})=2$
due to the celebrated results of Deligne combined with the Jacquet--Langlands correspondence (see \cite[Appendix]{lub}).

For $S\subset V_K$, we set
\begin{equation}\label{eq:sigma_S}
\sigma_S:=\limsup_{N\to \infty} \frac{1}{\log N} |\{v\in S:\, q_v\le N\}|,
\end{equation}
where $q_v$ denotes the cardinality of the residue field for non-Archimedean $v$,
and define
\begin{equation}\label{eq:q_s}
\mathfrak{q}_S({\rm G})=(1+\sigma_S)\sup_{v\in S} \mathfrak{q}_v({\rm G}).
\end{equation}

\begin{Theorem}\label{th:main1}
Let ${\rm G}\subset {\rm GL}_n$ be a connected simply connected almost simple algebraic group 
defined over a number field $K$, $S\subset V_K$, 
and $\Psi=(\psi_v)_{v\in S}$ a collection of nonincreasing functions $\psi_v:\R^+\to (0,1]$
such that $\psi_v=1$ for almost all $v$. Suppose that 
for a bounded subset $D$ of $G_S$ and a constant
$\alpha>\frac{\mathfrak{q}_{V_K \backslash S}({\rm G})}{2}\dim({\rm G})$, we have 
\begin{equation}\label{eq:main1}
\sum_{z \in {\rm G}(K)\cap D} \psi_{S}(\height(z))^{\alpha} = \infty.
\end{equation}
Then the set $\mathcal{W}(G_S, {\rm G}(K),\Psi)$ has full measure in $G_S$.
\end{Theorem}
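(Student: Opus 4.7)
The strategy is to realize $\mathcal{W}(G_S,{\rm G}(K),\Psi)$ as a $\limsup$ set and attack it with a divergence Borel--Cantelli argument. For each $z\in {\rm G}(K)\cap D$ set $B_z:=\{x\in G_S:\,\|x_v-z\|_v\le\psi_v(\height(z))\text{ for all }v\in S\}$, so that $\mathcal{W}(G_S,{\rm G}(K),\Psi)\supseteq \limsup_{\height(z)\to\infty}B_z$. The volume calculation preceding (\ref{eq:el1}) already gives $\lambda_S(B_z)\asymp \psi_S(\height(z))^{\dim({\rm G})}$. Because $\mathfrak{q}_v({\rm G})\ge 2$ for all $v$, the hypothesis $\alpha>\tfrac{\mathfrak{q}_{V_K\setminus S}({\rm G})}{2}\dim({\rm G})$ forces $\alpha> \dim({\rm G})$; combined with $\psi_S\le 1$ and (\ref{eq:main1}), this yields $\sum_z \lambda_S(B_z)=\infty$, so the first moment of the natural counting function diverges.

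The heart of the argument is a quasi-independence (second-moment) estimate: one must bound
\[
\Sigma(T):=\sum_{\substack{z_1,z_2\in {\rm G}(K)\cap D\\ \height(z_i)\le T}}\lambda_S(B_{z_1}\cap B_{z_2})
\]
by a constant multiple of $\bigl(\sum_{\height(z)\le T}\lambda_S(B_z)\bigr)^2$. The intersection $B_{z_1}\cap B_{z_2}$ is non-empty only when $z_1 z_2^{-1}$ lies in a small product neighbourhood of the identity in $G_S$; by ${\rm G}(K)$-translation, the problem therefore reduces to counting points of ${\rm G}(K)$ of bounded height in small neighbourhoods of the identity. This is precisely the counting problem controlled by quantitative equidistribution of averaging (Hecke-type) operators on the adelic homogeneous space ${\rm G}(K)\backslash {\rm G}(\mathbb{A}_K)$, whose effective norm bounds on $L^2_{00}({ G}_{V_K}/{\rm G}(K))$ come from the matrix-coefficient decay encoded in the integrability exponents $\mathfrak{q}_v({\rm G})$. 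Since the ball-shrinking factors $\psi_v$ already absorb the places in $S$, the operator decay that matters is at the places $v\in V_K\setminus S$, and the $(1+\sigma_{V_K\setminus S})$ factor in (\ref{eq:q_s}) accounts for the contribution of infinitely many small residue-field primes. Passing from the resulting $L^2$-bound to a pointwise counting estimate via Cauchy--Schwarz (equivalently, an $L^2\to L^\infty$ smoothing of indicator functions by bumps of scale $\psi_v$) costs a factor of $\tfrac{1}{2}\dim({\rm G})$ in the exponent, which is what produces the threshold $\alpha>\tfrac{\mathfrak{q}_{V_K\setminus S}({\rm G})}{2}\dim({\rm G})$.

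Given the first-moment divergence and the quasi-independence bound, a standard Chung--Erd\H{o}s / Paley--Zygmund inequality forces $\lambda_S\bigl(\mathcal{W}(G_S,{\rm G}(K),\Psi)\cap U\bigr)>0$ for every non-empty open $U\subset G_S$, and the essential ${\rm G}(K)$-invariance of $\mathcal{W}(G_S,{\rm G}(K),\Psi)$ combined with ergodicity of the ${\rm G}(K)$-action on $G_S$ supplies a zero-one law promoting positive measure to full measure. I expect the principal obstacle to be the spectral counting step: extracting from the available automorphic bounds an effective count of rational points in shrinking neighbourhoods that is sharp enough to hit the stated threshold, while correctly summing the contributions of the infinite set of places in $V_K\setminus S$. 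Once the effective mean-square estimate is in hand, the Borel--Cantelli/zero-one part is fairly routine.
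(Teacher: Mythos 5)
Your proposal takes a genuinely different route from the paper's, and as written it has two substantial gaps.

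The paper does not go through any second-moment or quasi-independence computation on $G_S$. Instead, it transfers the problem to the quotient $\Upsilon=G_{V_K}/{\rm G}(K)$ via the ``duality'' mechanism of Proposition~\ref{p:dual}: the condition that $x\in G_S$ is $\Psi$-approximable at scale $n$ is reformulated as the statement that the orbit $B_{2^n}^{-1}\varsigma$ of the corresponding point $\varsigma=(e,x^{-1}){\rm G}(K)\in\Upsilon$ hits a shrinking target $\Phi_n\subset\Upsilon$ of measure $\asymp R_n^{\dim {\rm G}}$. One then introduces weighted indicators $\phi_n=c_n 1_{\Phi_n}$ with $c_n$ chosen so that $\sum_n\int\phi_n\,d\mu=\infty$, and applies the effective mean ergodic theorem (Theorem~\ref{cor:mean_simply connected}) to show that $F_k:=\sum_{n\ge k}|\pi(\beta_n)\phi_n-\int\phi_n\,d\mu|$ has finite $L^2$-norm precisely when $\alpha>\mathfrak{q}\dim({\rm G})/2$ (this threshold comes out of the H\"older step, not from an $L^2\to L^\infty$ smoothing heuristic). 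Since $F_k\equiv\infty$ on the set of $\varsigma$ whose orbit eventually misses every $\Phi_n$, that bad set has $\mu$-measure zero; pulling back to $G_S$ by Fubini over a neighbourhood of identity in $G_{V_K\setminus S}$ already gives full measure of approximable points in every compact $\Omega\subset G_S$, with no zero-one law needed.

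In your proposal, the key step ``bound $\Sigma(T)$ by a constant times $\bigl(\sum_{\height(z)\le T}\lambda_S(B_z)\bigr)^2$'' is not carried out, and it is not a routine consequence of the spectral input. The available bound controls $L^2$-averages on $\Upsilon$; extracting from it a correlation count of ${\rm G}(K)$-points of bounded height in shrinking neighbourhoods, uniformly over the pairs $(z_1,z_2)$ with all possible separations and heights, and \emph{sharp enough to match the threshold} $\alpha>\mathfrak{q}\dim({\rm G})/2$, is exactly the technical content that the duality lemma packages; your one-line appeal to ``$L^2\to L^\infty$ smoothing costs a factor $\tfrac12\dim({\rm G})$'' is a heuristic, not an estimate. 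Separately, the step from positive measure to full measure via ``essential ${\rm G}(K)$-invariance'' of $\mathcal{W}(G_S,{\rm G}(K),\Psi)$ is not justified: the height appearing in the approximation inequality is not ${\rm G}(K)$-equivariant, so $\mathcal{W}$ is not obviously invariant even up to null sets, and you would need a genuine quantitative argument (as in Cassels/Gallagher-type zero-one laws) adapted to this non-Euclidean, multi-place setting. Both of these are exactly the places where the paper's reformulation on $\Upsilon$ does real work.
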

 
This result is analogous to Khintchine's Theorem (Theorem \ref{Khintchine}).
Comparing \eqref{eq:main1} with \eqref{eq:el1}, we conclude that it is optimal when  
$\mathfrak{q}_{V_K \backslash S}({\rm G})=2$. The assumption that ${\rm G}$ is simply connected
is essential here, and Theorem \eqref{th:main1} can be considered as a quantitative version
of the strong approximation property \cite[7.4]{PlaRa},
which is only valid for simply connected semisimple groups.

\vspace{0.2cm}

More generally, we consider a quasi-affine variety ${\rm X}\subset A^n$ defined over a number field $K$ 
and equipped with a transitive action of a connected almost simple 
algebraic group ${\rm G}\subset\hbox{GL}_n$ defined over $K$.
For $S\subset V_K$ and a collection of functions $\Psi=(\psi_v)_{v\in S}$, 
we are interested in analyzing the set $\mathcal{W}(X_S, {\rm X}(K),\Psi)$ defined as above.
Even when the set ${\rm X}(K)$ is not discrete in $X_S$, 
it might happen that its closure has measure zero in $X_S$, 
and in particular, the direct analogue of Theorem \ref{th:main1} fails. To describe the structure of
$\mathcal{W}(X_S, {\rm X}(K),\Psi)$, we observe that $X_S$ is a union of open sets
$$
X_{S,S'}:= \{x\in X_S: \|x_v\|_v\le 1~\hbox{for all $v\in S\backslash S'$}\},
$$
where $S'$ runs over finite subsets of $S$ containing the Archimedean places of $S$.
Hence, the problem reduces to analyzing the sets
$$
\mathcal{W}(X_{S,S'}, X_{S,S'}\cap {\rm X}(K),\Psi)\subset X_{S,S'}.
$$
When $S'$ contains all
places $v$ such that $\psi_v\ne 1$, it is easy to check that
\begin{align*}
\mathcal{W}(X_{S,S'}, X_{S,S'}\cap {\rm X}(K),(\psi_v)_{v\in S})
=&\mathcal{W}(X_{S'}, X_{S,S'}\cap {\rm X}(K),(\psi_v)_{v\in S'})\\
&\times\left (\prod_{v\in S\backslash S'} \{x\in {\rm
    X}(K_v):~\|x\|_v\le  1\}\right).
\end{align*}
As we will see below (cf. Lemma \ref{l:discrete}), if $X_{S,S'}\cap {\rm X}(K)$ is not discrete in $X_{S'}$,
then the closure of $X_{S,S'}\cap {\rm X}(K)$, embedded in $X_{S'}$, is open in $X_{S'}$,
and we shall prove an analogue of Theorem \ref{th:main1} for the sets
$$
\mathcal{W}(X_{S'}, X_{S,S'}\cap {\rm X}(K),(\psi_v)_{v\in S'})\subset X_{S'}.
$$
To state this result, we introduce a measure of the growth of the number of rational points in $X_{S,S'}$
defined by
\begin{align}\label{eq:ass}
\mathfrak{a}_{S,S'}({\rm X}) &:= \sup_{D} \limsup_{h\to \infty} \frac{\log |\{z\in {\rm X}(K)\cap D:\, \height(z)\le h \}|)}{\log h},
\end{align}
where $D$ runs over bounded subset of $X_{S,S'}$.
We note that $\mathfrak{a}_{S,S'}({\rm X})<\infty$.
When the supremum in (\ref{eq:ass})
is taken over all bounded subsets of $X_S$ we use the notation $\mathfrak{a}_{S}({\rm X})$. In the case of a group variety, 
if ${\rm G}$ is isotropic over $V_K\backslash S$ then 
$\mathfrak{a}_{S}({\rm G})>0$.

\begin{Theorem}\label{th:main2}
Let  ${\rm X}\subset A^n$ be a quasi-affine variety defined over a number field $K$ 
and equipped with a transitive action of a connected almost simple 
algebraic group ${\rm G}\subset\hbox{\rm GL}_n$ defined over $K$.
Let $S\subset V_K$, $S'$ be a finite subset of $S$
containing the Archimedean places of $S$, and let $\Psi=(\psi_v)_{v\in S'}$ be
a collection of nonincreasing functions $\psi_v:\R^+\to (0,1]$.
Suppose that 
for a bounded subset $D$ of $X_{S,S'}$ and a constant
\begin{equation}\label{eq:alpha}
\alpha > \frac{\mathfrak{a}_{S, S'}({\rm X})}{\mathfrak{a}_{S}({\rm G})}\frac{\mathfrak{q}_{V_K \backslash S}({\rm
    G})}{2}\dim({\rm X}),
\end{equation}
we have
\begin{equation}\label{eq:main4}
\sum_{z \in {\rm X}(K)\cap D}
\psi_{S'}(\height(z))^{\alpha} = \infty.
\end{equation}
Then $\overline{X_{S,S'}\cap {\rm X}(K)}$ is open in $X_{S'}$, and 
the set  $\mathcal{W}(X_{S'}, X_{S,S'}\cap {\rm X}(K),\Psi)$ has full measure in 
$\overline{X_{S,S'}\cap {\rm X}(K)}$.
\end{Theorem}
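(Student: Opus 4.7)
The plan is to transport the Diophantine approximation problem on ${\rm X}$ to the adelic homogeneous quotient $G_{V_K}/{\rm G}(K)$ and then attack it with a quantitative second-moment argument fed by the spectral bounds encoded in $\mathfrak{q}_{V_K \setminus S}({\rm G})$. The overall strategy parallels the proof of Theorem \ref{th:main1}, adapted to handle the fact that ${\rm X}$ is a homogeneous space under ${\rm G}$ rather than the group itself. First, I fix a base point $x_0 \in {\rm X}(K) \cap X_{S,S'}$ and, using transitivity, identify ${\rm X}$ as a homogeneous ${\rm G}$-space; the orbit ${\rm G}(K) \cdot x_0$ accounts for ${\rm X}(K) \cap X_{S,S'}$ up to finitely many orbits. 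The openness of $\overline{X_{S,S'} \cap {\rm X}(K)}$ in $X_{S'}$ follows from the discrete-or-open dichotomy of Lemma \ref{l:discrete}: the closure cannot be discrete because the divergence \eqref{eq:main4} exhibits infinitely many rational points of bounded height in a fixed bounded set.

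For the full-measure statement, let
$$
S_T(x) := \sum_{\substack{z \in {\rm X}(K) \cap D \\ \height(z) \le T}} \mathbf{1}_{B(z)}(x), \qquad B(z) := \prod_{v \in S'}\bigl\{ y_v \in {\rm X}(K_v) : \|y_v - z\|_v \le \psi_v(\height(z))\bigr\}.
$$
Nonsingularity gives $\lambda_{S'}(B(z)) \asymp \psi_{S'}(\height(z))^{\dim({\rm X})}$ uniformly on compacta, so the hypothesis \eqref{eq:main4} forces $\int_D S_T \, d\lambda_{S'} \to \infty$. The target $\mathcal{W}(X_{S'}, X_{S,S'} \cap {\rm X}(K), \Psi) \cap D$ is precisely $\{S_\infty = \infty\}$. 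By a Paley--Zygmund inequality it suffices to prove a second-moment bound $\int_D S_T^2 \, d\lambda_{S'} \ll \bigl(\int_D S_T \, d\lambda_{S'}\bigr)^2$, after which a Lebesgue density argument along translates of $D$ promotes ``positive measure on $D$'' to ``full measure on $\overline{X_{S,S'} \cap {\rm X}(K)}$''.

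The pairwise intersection $\lambda_{S'}(B(z_1) \cap B(z_2))$ is the key quantity. Writing $z_i = g_i \cdot x_0$ with $g_i \in {\rm G}(K)$, one expresses this intersection as a matrix coefficient
$$
\bigl\langle \pi_{V_K}(g_1^{-1} g_2)\mathbf{1}_{\widetilde{B}}, \mathbf{1}_{\widetilde{B}}\bigr\rangle_{L^2(G_{V_K}/{\rm G}(K))},
$$
where $\widetilde{B}$ is a suitable lift of the relevant ball to the group. Projecting onto the space of automorphic characters yields the main term $\lambda_{S'}(B(z_1))\lambda_{S'}(B(z_2))$, which is precisely the square of the first moment. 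The complementary contribution lies in $L^2_{00}$ and, by definition of $\mathfrak{q}_v({\rm G})$, decays quantitatively in the $V_K \setminus S$ directions. After interpolation in the spirit of \cite{GGN1}, this decay transfers (via the height formula \eqref{eq:height}) into a gain of $\psi_{S'}(\cdot)^{\mathfrak{q}_{V_K \setminus S}({\rm G})\dim({\rm X})/2}$ per pair. Summing over pairs and using that the number of $z \in {\rm X}(K) \cap D$ with $\height(z) \le T$ grows like $T^{\mathfrak{a}_{S,S'}({\rm X})}$ while the matching count on the group grows like $T^{\mathfrak{a}_S({\rm G})}$ produces exactly the threshold \eqref{eq:alpha}.

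The main obstacle will be the pairwise correlation estimate: one needs effective decay of matrix coefficients of $\pi_{V_K}$ along \emph{ball averages} rather than single group elements, and this averaging takes place on the fibers of ${\rm G} \to {\rm X}$ under the stabilizer $H$ of $x_0$. Integrating the spectral estimate quantitatively along $H$, while keeping track of the normalization so that the ratio $\mathfrak{a}_{S,S'}({\rm X})/\mathfrak{a}_S({\rm G})$ enters with the correct exponent, is the delicate step. A secondary technical issue is the need for a uniform comparability between $\height(z)$ and the displacement $\|g_z\|_{V_K \setminus S}$, which is what allows the counting exponents $\mathfrak{a}$ to translate into matrix-coefficient bounds. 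Once these ingredients are combined, divergence of the series in \eqref{eq:main4} forces the second-moment bound to close, and the Paley--Zygmund/density argument delivers the full-measure conclusion.
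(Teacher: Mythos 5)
Your proposal pursues a genuinely different strategy from the paper. The paper works directly on the ambient space $\Upsilon=G_{V_K}/{\rm G}(K)$: it takes a sequence of thin indicator functions $\phi_n=c_n 1_{\Phi_n}$ (with $\Phi_n$ supplied by Proposition \ref{p:dual2}), averages them against Haar-uniform measures $\beta_n'$ on balls in $G_{(V_K\setminus S)\cup(V_K^f\setminus S')}$, invokes the quantitative mean ergodic theorem (Theorem \ref{c:mean}), and concludes via the divergence Borel--Cantelli argument applied to the single random series $F_k$. This avoids ever having to estimate pairwise correlations between rational points. You instead propose a Paley--Zygmund / second-moment argument for the counting function $S_T$ on $X_{S'}$, which would require a pairwise quasi-independence bound $\lambda_{S'}(B(z_1)\cap B(z_2))\ll \lambda_{S'}(B(z_1))\lambda_{S'}(B(z_2))$ on average, and that is where the proposal is incomplete.

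The central gap is the correlation estimate. The identification of $\lambda_{S'}(B(z_1)\cap B(z_2))$ with the matrix coefficient $\langle \pi_{V_K}(g_1^{-1}g_2)1_{\widetilde B},1_{\widetilde B}\rangle$ is not correct as stated. The left side is a purely local quantity on $X_{S'}$ measuring how close $z_1$ and $z_2$ are in the $S'$-places; the right side is $\mu(\widetilde B\cap (g_1^{-1}g_2)\widetilde B)$ on $\Upsilon$, which is governed by the size of $g_1^{-1}g_2$ \emph{in all} places including $V_K\setminus S$. Moreover the passage from $z_i$ to $g_i$ is ambiguous up to the stabilizer $H$, which is a noncompact group, so the ``lift'' $\widetilde B$ and the ``integration along $H$'' you mention are not a secondary technicality but the whole crux of the argument; the paper absorbs this into Proposition \ref{p:dual2} once and for all, while your sketch postpones it. There is also no treatment of the automorphic characters: since ${\rm G}$ is not assumed simply connected, the space $L^2(\Upsilon)$ contains nontrivial $U$-invariant characters, and the ``main term'' of your decomposition is not just the constant; the paper has to work inside $G^U/{\rm G}(K)$ with the correction factor $\xi_U$, a step your proposal omits entirely. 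Finally, Paley--Zygmund only gives positive measure; upgrading to full measure on $\overline{X_{S,S'}\cap {\rm X}(K)}$ needs a zero--one law or an ergodicity argument that you gesture at (``Lebesgue density along translates of $D$'') but do not supply, whereas the paper's formulation on $\Upsilon$ gives an almost-everywhere conclusion directly. Until the correlation estimate and these structural issues are handled concretely, the proposal is a plausible alternative program rather than a proof.
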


Estimate \eqref{eq:alpha}
contains an interesting interplay between the arithmetic datum 
$\frac{\mathfrak{a}_{S, S'}({\rm X})}{\mathfrak{a}_{S}({\rm G})}$,
which measures the growth rates of rational points, and the analytic datum
$\mathfrak{q}_{V_K \backslash S}({\rm G})$, which measures 
the spectral gap for automorphic representations.
Typically,  $\frac{\mathfrak{a}_{S, S'}({\rm X})}{\mathfrak{a}_{S}({\rm G})}\le 1$ and 
$\frac{\mathfrak{q}_{V_K \backslash S}({\rm G})}{2}\ge 1$, but their product
must be always at least one. Indeed, if this is not the case, then
$$
\frac{\mathfrak{a}_{S, S'}({\rm X})}{\mathfrak{a}_{S}({\rm G})}\frac{\mathfrak{q}_{V_K \backslash S}({\rm
    G})}{2}\dim({\rm X})<\dim({\rm X}),
$$
and one can exhibit a family of approximation functions $\psi_v$ such that both \eqref{eq:el1} and
\eqref{eq:main4} hold. Since this contradicts Theorem \ref{th:main2}, we conclude that
$$
\mathfrak{q}_{V_K \backslash S}({\rm G})\ge 2\frac{\mathfrak{a}_{S, S'}({\rm X})}{\mathfrak{a}_{S}({\rm G})},
$$
which was also observed in \cite[Cor.~1.8]{GGN1}. If the equality holds (see, for instance, 
Example \ref{example}), then the result of Theorem \ref{th:main2} is optimal.

\vspace{0.2cm}

Now we state an analogue of the Jarn\'ik theorem (Theorem \ref{Jarnik})
which estimates the Hausdorff dimension of the sets $\mathcal{W}(X_{S'}, X_{S,S'}\cap {\rm X}(K),\Psi)$.
The problem of estimating the Hausdorff dimension in a nonconformal setting
is quite subtle, and in particular, there is no version of Jarn\'ik's theorem
for general rectangular regions defined by a family of functions $\psi_1,\ldots,\psi_d$.
Therefore, we restrict our attention to the case when all the functions $\psi_v$ are equal
to a single function $\psi$ and write $\mathcal{W}(X_{S'}, X_{S,S'}\cap {\rm X}(K),\psi)$
to denote the set of approximable points.

\begin{Theorem}\label{th:main3}
Let  ${\rm X}\subset A^n$ be a quasi-affine variety defined over a number field $K$ 
and equipped with a transitive action of a connected almost simple 
algebraic group ${\rm G}\subset\hbox{\rm GL}_n$ defined over $K$.
Let $S\subset V_K$, $S'$ be a finite subset of $S$
containing the Archimedean places of $S$, and let $\psi:\R^+\to (0,1]$ be a nonincreasing function.
Suppose that 
for some $0<\alpha<\sum_{v\in S'} r_v\dim({\rm X})$ and a bounded subset $D$ of $X_{S,S'}$,
we have
\begin{equation}\label{eq:hh}
\sum_{z \in {\rm X}(K)\cap D}
\psi(\height(z))^{\alpha} = \infty.
\end{equation}
Then $\overline{X_{S,S'}\cap {\rm X}(K)}$ is open in $X_{S'}$,
and the intersection of the set $\mathcal{W}(X_{S'}, X_{S,S'}\cap {\rm X}(K),\Psi)$
with every nonempty open subset of $\overline{X_{S,S'}\cap {\rm X}(K)}$
has Hausdorff dimension at least
$$
\frac{2\mathfrak{a}_{S}({\rm G})}{\mathfrak{a}_{S, S'}({\rm X})\mathfrak{q}_{V_K \backslash
    S}({\rm G})} \cdot \alpha.
$$
\end{Theorem}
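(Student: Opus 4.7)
The plan is to derive the Jarn\'ik-type Hausdorff dimension bound from the Khintchine-type full measure result of Theorem \ref{th:main2} via the \emph{mass transference principle} of Beresnevich--Velani on the metric measure space $X_{S'}$. The latter has real dimension $d:=\sum_{v\in S'}r_v\dim({\rm X})$, and since ${\rm X}$ is nonsingular the measure $\lambda_{S'}$ is Ahlfors $d$-regular on compact subsets, which is the structural hypothesis required for MTP. Set
$$
\beta_0:=\frac{2\mathfrak{a}_{S}({\rm G})}{\mathfrak{a}_{S,S'}({\rm X})\mathfrak{q}_{V_K\backslash S}({\rm G})}\alpha,
$$
the target dimension; the inequality $\mathfrak{q}_{V_K\backslash S}({\rm G})\ge 2\mathfrak{a}_{S,S'}({\rm X})/\mathfrak{a}_{S}({\rm G})$ observed after Theorem \ref{th:main2} yields $\beta_0\le\alpha<\sum_{v\in S'}r_v\dim({\rm X})=d$. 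I would fix an arbitrary $\beta<\beta_0$ and show that $\mathcal{W}(X_{S'},X_{S,S'}\cap{\rm X}(K),\psi)\cap B$ has Hausdorff dimension at least $\beta$ for every nonempty open ball $B\subset\overline{X_{S,S'}\cap{\rm X}(K)}$; letting $\beta\uparrow\beta_0$ then completes the proof.

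The key step is to introduce the contracted approximation function $\tilde\psi(t):=\psi(t)^{\beta/d}$, which is still nonincreasing and $(0,1]$-valued. Using $\sum_{v\in S'}r_v=d/\dim({\rm X})$ one computes
$$
\tilde\psi_{S'}(t)^{\tilde\alpha}=\psi(t)^{\beta\tilde\alpha/\dim({\rm X})}.
$$
I would then choose $\tilde\alpha$ strictly greater than $\frac{\mathfrak{a}_{S,S'}({\rm X})}{\mathfrak{a}_{S}({\rm G})}\cdot\frac{\mathfrak{q}_{V_K\backslash S}({\rm G})}{2}\dim({\rm X})$ but close enough that $\beta\tilde\alpha/\dim({\rm X})\le\alpha$; the strict inequality $\beta<\beta_0$ guarantees that such a choice is possible. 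The hypothesis (\ref{eq:hh}) then forces $\sum_{z\in{\rm X}(K)\cap D}\tilde\psi_{S'}(\height(z))^{\tilde\alpha}=\infty$, so Theorem \ref{th:main2} applied to $\tilde\psi$ yields at once that $\overline{X_{S,S'}\cap{\rm X}(K)}$ is open in $X_{S'}$ (the first conclusion of the present theorem) and that $\mathcal{W}(X_{S'},X_{S,S'}\cap{\rm X}(K),\tilde\psi)$ has full $\lambda_{S'}$-measure in this open set.

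It remains to invoke the mass transference principle on each nonempty open ball $B\subset\overline{X_{S,S'}\cap{\rm X}(K)}$. The two $\limsup$ sets to be compared are those of the balls of radius $\psi(\height(z))^{\beta/d}$ and $\psi(\height(z))$ respectively around $z\in{\rm X}(K)\cap D$; their radii are related by the exponent $d/\beta$, which is precisely the MTP scaling relation in an Ahlfors $d$-regular space for target Hausdorff exponent $\beta$. Thus the full $\lambda_{S'}$-measure of the larger $\limsup$ set in $B$ transfers to Hausdorff dimension at least $\beta$ for the smaller, which is the desired conclusion. The principal technical obstacle I expect is verifying applicability of the Beresnevich--Velani MTP in this $S$-arithmetic setting: balls in the $\max$-metric on $X_{S'}$ are products of equal-radius balls in each factor ${\rm X}(K_v)$, a product of archimedean and non-archimedean analytic manifolds, so one must check that the MTP machinery carries over through this product. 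The Ahlfors $d$-regularity of $\lambda_{S'}$, a consequence of nonsingularity of ${\rm X}$ and the product structure of the measure, is the main input making this verification possible.
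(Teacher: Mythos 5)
Your proposal is correct and follows essentially the same strategy as the paper's proof: apply Theorem \ref{th:main2} to the contracted approximation function $\tilde\psi = \psi^{\beta/d}$ to obtain a full-measure $\limsup$ set of enlarged balls, then invoke the Beresnevich--Velani mass transference principle (the paper's Theorem \ref{th:BV}, in the local variant spelled out after its statement, since Ahlfors $d$-regularity of $\lambda_{S'}$ is verified only on compact subsets of $\overline{X_{S,S'}\cap{\rm X}(K)}$) to pass to Hausdorff dimension $\geq\beta$ for the original $\limsup$, and finally let $\beta\uparrow\beta_0$. One small point in your favor: your choice of auxiliary exponent $\tilde\alpha$, subject to $\beta\tilde\alpha/\dim({\rm X})\leq\alpha$ and $\tilde\alpha>\frac{\mathfrak{a}_{S,S'}({\rm X})}{\mathfrak{a}_S({\rm G})}\frac{\mathfrak{q}_{V_K\backslash S}({\rm G})}{2}\dim({\rm X})$, correctly accounts for the factor $\sum_{v\in S'}r_v = d/\dim({\rm X})$ that enters when converting the hypothesis $\sum\psi^\alpha=\infty$ of Theorem \ref{th:main3} into the hypothesis $\sum\tilde\psi_{S'}^{\tilde\alpha}=\infty$ of Theorem \ref{th:main2}; the paper's stated choice $\beta=(d/s)\alpha$ (in its notation, with $s$ the target dimension) overshoots by exactly this factor when $\sum_{v\in S'}r_v>1$, and your bookkeeping is the one that closes the gap cleanly without changing the final conclusion.
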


We note that this estimate is optimal if 
$\mathfrak{q}_{V_K \backslash S}({\rm G})= 2\frac{\mathfrak{a}_{S, S'}({\rm X})}{\mathfrak{a}_{S}({\rm G})}$,
which holds for a number of cases (see, for instance, Example \ref{example}).

\subsection*{Acknowledments}
The first author acknowledges support of  EPSRC. The second author acknowledges
  support of EPSRC, ERC and RCUK. The third author acknowledges support of ISF.

\section{Notation}
Let $K$ be a number field.
We denote by $V_K$ the set of normalised absolute values of $K$
and by $K_v$ the corresponding completions. 
The subsets of Archimedean and non-Archimedean absolute values are denoted by 
$V_K^\infty$ and $V_K^f$ respectively.
For non-Archimedean $v$, we denote by $O_v$
the ring of integers in $K_v$. 
For a subset $T\subset V_K$, we introduce the ring $O_T$ of $T$-integers:
$$
O_T := \{x \in K~:~|x|_v\le 1~\text{for non-Archimedean}~v \notin T\}.
$$

Let ${\rm G}\subset GL_n$ be a connected almost simple linear algebraic group
defined over $K$.
For $v\in V_K$, we set $G_v={\rm G}(K_v)$, and for $S\subset V_K$,
$G_S$ denotes the restricted direct product of $G_v$, $v\in S$ with respect to ${\rm G}(O_v)$.

When $S=S_1\sqcup S_2$, we have $G_{S}=G_{S_1}\times G_{S_2}$,
and in order to simplify notation we often identify a subset $B$ of $G_{S_1}$
with the subset $B\times \{e\}$ of $G_{S}$.

We define a height function $\height$ on $G_{V_K}$ by
\begin{equation}\label{eq:hhh}
\height(g) :=\prod_{v\in V_K} \max (1,\|g_v\|_v), \quad g\in G_{V_K}.
\end{equation}
This extends the definition of the height function from (\ref{eq:height}). 

For each $v$, we fix a good special maximal compact subgroup $U_v \subset G_v$
so that $U_v = {\rm G}(O_v)$ for all but finitely many $v$.
For $S\subset V^f_K$, we set $U_S=\prod_{v\in S} U_v$.
Each group $G_v$ is equipped with an invariant measure $m_v$ which we normalize for
non-Archimedean $v$ so that $m_v(U_v)=1$. The groups $G_S$ are equipped with the corresponding
product measures $m_S$. In particular, we denote $m_{V_K}$ by $m$.
Since ${\rm G}$ is semisimple, the subgroup ${\rm G}(K)$ has finite covolume in $G_{V_K}$.
We denote by $\mu$ the invariant probability measure on the quotient
space $\Upsilon:=G_{V_K}/{\rm G}(K)$.

\section{Group varieties}\label{sec:group}

Our proof of the main theorem is based on analysis of suitable averaging operators
on the space $L^2(\Upsilon)$ of square-integrable functions of $\Upsilon$.
For $T\subset V_K$ and a probability measure $\beta$ on $G_{T}$, we introduce
the averaging operator $\pi_{T}(\beta):L^2(\Upsilon)\to L^2(\Upsilon)$ defined by
\begin{equation}
\pi_{T}(\beta)\phi(\varsigma)=\int_{G_{T}}\phi(g^{-1}\varsigma)\,d\beta(g), \quad \phi\in L^2(\Upsilon).
\end{equation}
In the case when ${\rm G}$ is simply connected, the asymptotic behaviour of these
averaging operators is described by

\begin{Theorem}[\cite{GGN1}, Th. 4.2]\label{cor:mean_simply connected}
Assume that ${\rm G}$ is simply connected, and let $\beta$ be the Haar-uniform probability
measure supported on bi-$U_{V_K\backslash S}$-invariant bounded subset $B$ of $G_{V_K\backslash S}$.
Then for every $\phi\in L^2(\Upsilon)$,
$$
\left\| \pi_{V_K\backslash S}(\beta)\phi-\int_\Upsilon\phi\,d\mu\right\|_2\ll_{\delta}
m_{V_K\backslash S}(B)^{-\frac{1}{\mathfrak{q}_{V_K\backslash S}({\rm G})}+\delta}\|\phi\|_2
$$
for every $\delta>0$.
\end{Theorem}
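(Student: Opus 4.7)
My plan is to reduce the estimate to an operator-norm bound on $L^2_{00}(\Upsilon)$ and then convert the integrability exponent $\mathfrak{q}_{V_K\setminus S}(\mathrm G)$ into a quantitative decay rate via spherical matrix-coefficient bounds. First I would invoke strong approximation: since $\mathrm G$ is simply connected and almost simple, it has no nontrivial $K$-rational characters, and hence no nontrivial continuous automorphic character of $G_{V_K}$ exists. Consequently $L^2_{00}(\Upsilon)$ coincides with $L^2_0(\Upsilon)$, the orthogonal complement of the constants, so writing $\phi = \int\phi\,d\mu + \phi_0$ with $\phi_0\in L^2_0(\Upsilon)$ reduces the estimate to showing
$$
\|\pi_{V_K\setminus S}(\beta)\phi_0\|_2 \ll_\delta m_{V_K\setminus S}(B)^{-1/\mathfrak{q}_{V_K\setminus S}(\mathrm G) + \delta}\|\phi_0\|_2.
$$

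Next I would exploit the bi-$U_{V_K\setminus S}$-invariance of $\beta$. The operator $\pi_{V_K\setminus S}(\beta)$ factors through the orthogonal projection onto $U_{V_K\setminus S}$-invariant vectors, so it suffices to bound its norm on the subspace of $U_{V_K\setminus S}$-invariant vectors in $L^2_0(\Upsilon)$. On this subspace, the definition of the spherical integrability exponent in \eqref{eq:q_v} (and its restricted-product version encoded by \eqref{eq:q_s}) gives that every diagonal matrix coefficient $g\mapsto\langle\pi_{V_K\setminus S}(g)w,w\rangle$ lies in $L^{\mathfrak q+\delta}(G_{V_K\setminus S})$ for $\mathfrak q=\mathfrak{q}_{V_K\setminus S}(\mathrm G)$. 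By the Cowling--Haagerup--Howe principle (applied locally at each place and then assembled), one upgrades this integrability statement to a pointwise upper bound of the form
$$
|\langle\pi_{V_K\setminus S}(g)w,w\rangle|\;\le\; C_\delta\,\Xi(g)^{2/(\mathfrak q+\delta)}\|w\|_2^{\,2},
$$
where $\Xi$ is the Harish-Chandra spherical function of $G_{V_K\setminus S}$.

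Finally I would run the standard Cauchy--Schwarz / convolution computation:
$$
\|\pi_{V_K\setminus S}(\beta)w\|_2^{\,2} \;=\; \langle \pi_{V_K\setminus S}(\check\beta\ast\beta)w,w\rangle \;\le\; \int_{G_{V_K\setminus S}} (\check\beta\ast\beta)(g)\,|\langle\pi_{V_K\setminus S}(g)w,w\rangle|\,dm_{V_K\setminus S}(g),
$$
bound the inner product by $\Xi^{2/(\mathfrak q+\delta)}$ and use $\|\check\beta\ast\beta\|_\infty\le m_{V_K\setminus S}(B)^{-1}$ together with the support bound $\mathrm{supp}(\check\beta\ast\beta)\subset B^{-1}B$. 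This reduces matters to the spherical integral
$$
\frac{1}{m_{V_K\setminus S}(B)}\int_{B^{-1}B}\Xi(g)^{2/(\mathfrak q+\delta)}\,dm_{V_K\setminus S}(g),
$$
which, by the classical estimate on the growth of $\Xi$-integrals over bi-$U$-invariant sets in a semisimple group (combined over places via the restricted product and the factor $(1+\sigma_S)$ in the definition of $\mathfrak q_{V_K\setminus S}(\mathrm G)$), is $\ll m_{V_K\setminus S}(B)^{-2/\mathfrak q+\delta'}$.

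The main obstacle is the last step: producing a sharp bound for the spherical integral over $B^{-1}B$ uniformly in the bi-$U_{V_K\setminus S}$-invariant set $B$ and simultaneously over the infinite restricted product of local groups. This is where the arithmetic information enters through $\sigma_S$ and where one must combine the archimedean Harish-Chandra estimates with non-archimedean Macdonald-type formulas for the spherical function, while controlling the places at which $B$ differs from $U_{V_K\setminus S}$. Once this is in hand, the Cauchy--Schwarz expansion yields the claimed norm bound, and the arbitrariness of $\delta>0$ completes the proof.
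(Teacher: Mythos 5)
The statement you are addressing is cited from \cite{GGN1}; the present paper gives no proof, so I compare your proposal against the argument in that reference and the standard spectral-transfer technology it uses. Your skeleton is correct: reduce to $L^2_0(\Upsilon)$ using simple connectedness (no nontrivial automorphic characters), restrict to $U_{V_K\setminus S}$-invariant vectors, and convert the integrability exponent into an operator-norm bound. But the specific route through a Harish--Chandra pointwise bound and a spherical integral over $B^{-1}B$ has a genuine gap at exactly the step you flag, and that gap cannot be closed in the form you propose.

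The problem is that the inequality
$\frac{1}{m(B)}\int_{B^{-1}B}\Xi^{2/(\mathfrak q+\delta)}\,dm\ll m(B)^{-2/\mathfrak q+\delta'}$
is false for a general bi-$U_{V_K\setminus S}$-invariant bounded set $B$. Once you replace $\check\beta\ast\beta$ by the pointwise majorant $m(B)^{-1}\mathbf 1_{B^{-1}B}$, you discard the information $\|\check\beta\ast\beta\|_1=1$, and $m(B^{-1}B)$ can be of order $m(B)^2$ (for instance when $B$ is bi-invariant and concentrated on a large Cartan sphere). The resulting spherical integral is then far too large, and there is no classical estimate bounding it by a power of $m(B)$ uniformly over all bi-invariant $B$. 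A secondary issue is the form of the Cowling--Haagerup--Howe bound itself: for spherical matrix coefficients in $L^{\mathfrak q+\delta}$, what the tensor-power argument underlying CHH actually yields is the pointwise majorant $\Xi^{1/\lceil(\mathfrak q+\delta)/2\rceil}$, which coincides with your $\Xi^{2/(\mathfrak q+\delta)}$ only when $\mathfrak q$ is an even integer.

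The argument that does work, and that \cite{GGN1} employs, avoids both $B^{-1}B$ and pointwise matrix-coefficient estimates. With $m$ the least integer $\ge\mathfrak q_{V_K\setminus S}(\mathrm G)/2$, the $L^{\mathfrak q+\delta}$-integrability of spherical matrix coefficients implies that $\pi_{V_K\setminus S}^{\otimes m}$ restricted to $L^2_0$ has $L^{2+\delta'}$ coefficients for a dense set of vectors, hence is weakly contained in the regular representation $\lambda$ of $G_{V_K\setminus S}$. Weak containment gives $\|\pi_{V_K\setminus S}(\beta)\|^m=\|\pi^{\otimes m}(\beta)\|\le\|\lambda(\beta)\|$, and $\|\lambda(\beta)\|$ is bounded directly from $\|\check\beta\ast\beta\|_1=1$ and $\|\check\beta\ast\beta\|_\infty\le m(B)^{-1}$ by the Kunze--Stein phenomenon (interpolation), giving $\|\lambda(\beta)\|\ll_\delta m(B)^{-1/2+\delta}$. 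No information about $\mathrm{supp}(\check\beta\ast\beta)$ or its measure is ever used, so the argument is uniform over all bounded bi-$U_{V_K\setminus S}$-invariant $B$. Finally, the factor $(1+\sigma_{V_K\setminus S})$ in \eqref{eq:q_s} does not come from any spherical-function estimate; it reflects that the adelic matrix coefficient of a factorizable vector is a restricted product of local ones whose $L^{q}(G_v)$-norms are each bounded below by $m_v(U_v)=1$, and the accumulation over the infinitely many non-archimedean places with small residue field, quantified by \eqref{eq:sigma_S}, must be absorbed into the definition of the global exponent rather than recovered from a local pointwise bound.
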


The connection between behavior of the averaging operators and Diophantine approximation
is provided by the following proposition.
We use the notation $I_v=\{q_v^n\}_{n>0}$ for $v\in V_K^f$, where $q_v$ denotes the cardinality
of the residue field, and $I_v=(0,1)$ for $v\in V_K^\infty$. 

\begin{Prop}[\cite{GGN1}, Prop.~5.3] \label{p:dual}
Fix $S\subset V_K$, finite $S'\subset S$ and a bounded subset $\Omega$ of $G_S$.
Then there exists a family of measurable subsets
$\Phi_\epsilon$ of $\Upsilon$ indexed by $\epsilon=(\epsilon_v)_{v\in S'}$,
where $\epsilon_v\in I_v$,
that satisfies
\begin{equation}\label{eq:llow}
 \prod_{v\in S'}  \epsilon_v^{r_v\dim ({\rm G})}\ll \mu(\Phi_\epsilon)\ll \prod_{v\in S'}  \epsilon_v^{r_v\dim ({\rm G})}
\end{equation}
and the following property holds:

if for a subset $B\subset G_{V_K\backslash S}$,
$\epsilon=(\epsilon_v)_{v\in S'}$ as above, $x\in \Omega$ and $\varsigma:=(e,x^{-1}){\rm G}(K)\in \Upsilon$,
we have 
\begin{equation*}
B^{-1}\varsigma\cap \Phi_\epsilon\ne \emptyset,
\end{equation*}
then there exists $z\in {\rm G}(K)$ such that
\begin{equation}\label{eq:cl1}
\height(z)\le c_0 \sup_{b\in B} \height(b)
\end{equation}
and
\begin{align*}
&\|x_v-z\|_v\le \epsilon_v \quad\quad\hbox{for all $v\in S'$,}\\
&\|x_v-z\|_v\le 1 \quad\quad\hbox{for all $v\in S\backslash S'$.}\nonumber
\end{align*}
\end{Prop}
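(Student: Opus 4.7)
The plan is to construct $\Phi_\epsilon$ as the image in $\Upsilon$ of a carefully chosen identity neighborhood $N_\epsilon$ in $G_{V_K}$, with $\epsilon_v$-scale in the $S'$-coordinates and a fixed bounded scale elsewhere. The approximation statement then follows from the algebraic identity $z=(e,x)\,b\,n$ read off place by place, while the measure estimate reduces to local volume growth at the identity in each $G_v$.

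Concretely, I would take
$$
N_\epsilon \;=\; \prod_{v\in S'} N_{\epsilon_v,v}\;\times\; \prod_{v\in S\setminus S'} C_v \;\times\; \prod_{v\in V_K\setminus S} C_v,
$$
where $N_{\epsilon_v,v}=\{g\in G_v:\|g-e\|_v\le c\,\epsilon_v\}$ with $c>0$ chosen small enough (depending on $\Omega$) that $\|x_v g - x_v\|_v\le\epsilon_v$ for every $x\in\Omega$ and $g\in N_{\epsilon_v,v}$, and each $C_v$ is a fixed symmetric compact identity neighborhood in $G_v$, equal to $U_v$ for all but finitely many non-Archimedean $v$. Set $\Phi_\epsilon=N_\epsilon\,{\rm G}(K)/{\rm G}(K)\subset\Upsilon$. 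Since $G_v$ is a $K_v$-analytic manifold of $K_v$-dimension $\dim({\rm G})$, local volume growth gives $m_v(N_{\epsilon_v,v})\asymp \epsilon_v^{r_v\dim({\rm G})}$. Applying discreteness of ${\rm G}(K)$ in $G_{V_K}$ to the bounded set $N_\epsilon N_\epsilon^{-1}$, the number of $\gamma\in{\rm G}(K)$ with $N_\epsilon\cap N_\epsilon\gamma\ne\emptyset$ is bounded uniformly in $\epsilon$, so that
$$
\mu(\Phi_\epsilon)\;\asymp\; m(N_\epsilon)\;\asymp\; \prod_{v\in S'}\epsilon_v^{r_v\dim({\rm G})},
$$
which is \eqref{eq:llow}.

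Next, to deduce the Diophantine consequence, suppose $B^{-1}\varsigma\cap\Phi_\epsilon\ne\emptyset$. Unwinding yields $b\in B$, $n\in N_\epsilon$, and $z\in{\rm G}(K)$ with $b^{-1}(e,x^{-1})z=n$, i.e.\ $z=(e,x)\,b\,n$ in $G_{V_K}$. Reading coordinates: for $v\in S'$, $z_v=x_v n_v$ with $n_v\in N_{\epsilon_v,v}$, whence $\|z_v-x_v\|_v=\|x_v(n_v-e)\|_v\le\epsilon_v$ by the choice of $c$. For $v\in S\setminus S'$, the same identity with $n_v\in C_v$ gives $\|z_v-x_v\|_v\le 1$ after shrinking the finitely many exceptional $C_v$ once for all using $x\in\Omega$. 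For $v\in V_K\setminus S$, $z_v=b_v n_v$ with $n_v\in U_v$ for all but finitely many $v$, so that $\max(1,\|z_v\|_v)\le c_v\max(1,\|b_v\|_v)$ with $c_v=1$ for almost every $v$; combined with the uniform boundedness of $\max(1,\|z_v\|_v)$ for $v\in S$, taking the product over $v$ gives $\height(z)\le c_0\sup_{b\in B}\height(b)$.

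The main obstacle is to ensure that the implicit constants in the measure lower bound and in the height bound are independent of $\epsilon$. The key input is the discreteness of the diagonal embedding ${\rm G}(K)\hookrightarrow G_{V_K}$, which prevents the fixed bounded non-$S'$ part of $N_\epsilon$ from collecting an unbounded number of ${\rm G}(K)$-translates as $\epsilon$ varies, and which simultaneously controls the height contribution away from $S$. Once the $C_v$ are shrunk once at the outset to handle the finitely many exceptional places, all remaining estimates are straightforward local computations inside the individual $G_v$.
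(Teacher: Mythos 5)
The paper cites this statement as Proposition~5.3 of \cite{GGN1} and does not reprove it, so there is no in-paper proof to compare against line by line. That said, your construction is the natural one and is almost certainly what the cited reference does: take $\Phi_\epsilon$ to be the image in $\Upsilon$ of a product identity neighbourhood $N_\epsilon\subset G_{V_K}$ that is $\epsilon_v$-small at the $S'$-places and of fixed bounded size elsewhere, read the approximating element $z$ off the identity $(b^{-1},x^{-1})z=n$ place by place, and get the measure bounds from local volume growth together with a uniform overlap bound from discreteness of ${\rm G}(K)$. The remark in the paper that the upper bound in~\eqref{eq:llow} ``follows easily from the construction of the sets $\Phi_\epsilon$'' is consistent with exactly this construction, since $\mu(\Phi_\epsilon)\le m(N_\epsilon)$ is immediate once $\Phi_\epsilon$ is an image of a product neighbourhood.

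A few small points you should tighten if you were writing this out fully. First, for non-Archimedean $v\in S'$ the admissible radii are powers of $q_v$ (the set $I_v$), so the rescaling factor $c$ in $N_{\epsilon_v,v}=\{\,\|g-e\|_v\le c\epsilon_v\,\}$ should be absorbed into the choice of power of $q_v$ rather than treated as a continuous scalar; this is harmless in the ultrametric setting but should be said. Second, the local volume growth $m_v(N_{\epsilon_v,v})\asymp\epsilon_v^{r_v\dim {\rm G}}$ depends on the convention for the normalized absolute value at complex places and on the metric~\eqref{eq:dist}; with the paper's conventions one must check that the exponent really is $r_v\dim {\rm G}$ and not $\dim {\rm G}$, which is exactly why the factor $r_v$ appears. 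Third, for the height bound you implicitly use that $\Omega\subset G_S$ bounded forces $x_v\in {\rm G}(O_v)$ for all but finitely many $v\in S$, so that $\max(1,\|z\|_v)=1$ at those places and only finitely many places contribute nontrivial constants; this is where the constant $c_0$ (depending on $\Omega$ and the fixed $C_v$) is collected. None of these is a gap in the approach, only bookkeeping that needs to be made explicit.
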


We note that the upper bound in \eqref{eq:llow} was not stated explicitly in \cite{GGN1},
but it follows easily from the construction of the sets $\Phi_\epsilon$.

\vspace{0.2cm}

\subsection*{Proof of Theorem \ref{th:main1}}
We note that the divergence assumption \eqref{eq:main1} implies that the group ${\rm G}$ is
isotropic over $V_K\backslash S$. Indeed, if ${\rm G}$ is anisotropic over $V_K\backslash S$,
then $G_{V_K\backslash S}$ is compact (see \cite[\S3.1]{PlaRa}).
Since ${\rm G}(K)$ embeds discretely in $G_{V_K}$, the number of
${\rm G}(K)$-points in $G_{V_K\backslash S}\times D$ is finite,
and this contradicts \eqref{eq:main1}.

Since ${\rm G}$ is isotropic over $V_K\backslash S$, 
it follows from the strong approximation property of 
simply connected groups (see \cite[\S7.4]{PlaRa}) that ${\rm G}(K)$ is dense in $G_S$.
Hence, the claim of the theorem follows immediately 
if $\psi_v(t)\nrightarrow 0$ as $t\to \infty$ for all $v\in S$.
From now on, we assume that $\psi_v(t)\rightarrow 0$ as $t\to \infty$ for at least one $v\in S$.

For $v\in V_K^f$, 
given a nonincreasing function $\psi_v:\R^+\to (0,1]$, one can construct a nonincreasing function
$\tilde \psi_v:\R^+\to (0,1)$ such that $\tilde \psi_v\le \psi_v\le q_v \tilde\psi_v$  and
$\hbox{Im}(\tilde \psi_v)\subset I_v$.
Now if we replace $\psi_v\ne 1$ in $\Psi$ by $\tilde \psi_v$, then the 
divergence assumption (\ref{eq:main1}) still holds. 
Since $\tilde \psi_v\le \psi_v$, the theorem for $\tilde \psi_v$'s implies 
the theorem of $\psi_v$'s. Hence, we may assume without loss of 
generality that $\hbox{Im}(\psi_v)\subset I_v$ for all $\psi_v\ne 1$.

To simply notation, we set 
$$
d:=\dim({\rm G}),\;\; \mathfrak{q}:=\mathfrak{q}_{V_K \backslash S}({\rm G}),\;\; \mathfrak{a}:=\mathfrak{a}_{S}({\rm G}).
$$


We consider a family of bounded bi-$U_{V_K \backslash S}$-invariant subsets of $G_{V_K\backslash S}$
defined by
$$
B_h := U_{V_K \backslash S}\{ g \in G_{V_K \backslash S}~:~\height(g) \le  h \}U_{V_K  \backslash S}.
$$
Let 
$$
U:=\{g\in G_{V_K\backslash S}:\, \|g_v-e\|\le 1\quad\hbox{for $v\in {V_K\backslash S}$}\}.
$$
Since $U$ and $U_{V_K \backslash S}$ are compact, there exists $c_1\ge 1$ such that
\begin{equation}\label{eq:h2}
\sup_{b\in U^{-1}B_{h}} \height(b)\le c_1\, h.
\end{equation}
It follows from the definition of $\mathfrak{a}=\mathfrak{a}_{S}({\rm G})$ 
that for every $\delta>0$ and sufficiently large $h$,
\begin{equation}\label{eq:rational}
|{\rm G}(K)\cap D\cap \{h/2<\height\le h\}|\le h^{\mathfrak{a}+\delta}.
\end{equation}
Since the function $\psi_S$ is nonincreasing,
it follows from (\ref{eq:main1}) that
$$
\sum_{n = 1}^{\infty} |{\rm G}(K)\cap D\cap \{2^{n-1}<\height\le 2^n\}|\psi_S(2^{n-1})^{\alpha} = \infty,
$$
and because of \eqref{eq:rational} we also get
\begin{equation}\label{newsum1}
\sum_{n = 1}^{\infty} 2^{(\mathfrak{a}+\delta)n}\psi_S(2^{n-1})^{\alpha} = \infty.
\end{equation}
Since $0< \psi_S\le 1$, there exists $\alpha_0(\delta)\in [0,\infty]$ such that this series converges
for all $\alpha>\alpha_0(\delta)$ and diverges for all $\alpha<\alpha_0(\delta)$.
We fix $\alpha_0>\mathfrak{q}d/2$ such that series (\ref{eq:main1}) diverges for $\alpha=\alpha_0$.
Since divergence in \eqref{eq:main1} implies divergence in \eqref{newsum1},
we have $\alpha_0(\delta)\ge \alpha_0$.

Using that the function $\psi_S$ is monotone, 
it is easy to check that \eqref{newsum1} is equivalent to 
\begin{equation}\label{newsum11}
\sum_{n = 1}^{\infty} 2^{(\mathfrak{a}+\delta)n}\psi_S(c\,2^{n-1})^{\alpha} = \infty.
\end{equation}
with any $c>0$. We choose $c=c_0 c_1/2$ where $c_0$ is as in (\ref{eq:cl1}), and
$c_1$ is as in \eqref{eq:h2}.

In order to continue our argument, it would be convenient to know that $\alpha_0(\delta)<\infty$.
This is arranged by the following construction. 
If 
$$
2^{(\mathfrak{a}+\delta)n}\psi_S(c\,2^{n-1})^{\alpha_0}\to 0\quad\hbox{as $n\to \infty$},
$$
we set $R_n:=\psi_S(c\,2^{n-1})$. Otherwise, there exists $\epsilon>0$ such that the set
$$
N:=\{n:\, 2^{(\mathfrak{a}+\delta)n}\psi_S(c\,2^{n-1})^{\alpha_0}\ge \epsilon\}
$$
is infinite. In this case, we set 
$$
R_n:=
\left\{
\begin{tabular}{ll}
$(\epsilon\, 2^{-(\mathfrak{a}+\delta)n})^{1/\alpha_0}$ & \hbox{for $n\in N$,}\\
$\psi_S(c\,2^{n-1})$ & \hbox{otherwise.}
\end{tabular}
\right.
$$
In both cases, we have $R_n\le \psi_S(c\,2^{n-1})$, and since $N$ is infinite,
\begin{equation}\label{newsum111}
\sum_{n = 1}^{\infty} 2^{(\mathfrak{a}+\delta)n}R_n^{\alpha} = \infty
\end{equation}
for $\alpha=\alpha_0$.
There exists $\alpha_1(\delta)\in [0,\infty]$ such that this series converges
for all $\alpha>\alpha_1(\delta)$ and diverges for all $\alpha<\alpha_1(\delta)$.
Clearly, $\alpha_1(\delta)\ge \alpha_0$.
It also follows from the definition of $R_n$ that 
$R_n\ll 2^{-\kappa n}$ with some $\kappa>0$. Hence, $\alpha_1(\delta)<\infty$.

Let $S'$ be the  finite subset of $S$ on which $\psi_v\ne 1$.
Recall that 
$$
\psi_S(t)=\prod_{v\in S'} \psi_v(t)^{r_v}.
$$
Since $R_n\le \psi_S(c\,2^{n-1})$, we may write 
$$
R_n=\prod_{v\in S'} (\epsilon_v^{(n)})^{r_v},
$$
where $\epsilon_v^{(n)}\le \psi_v(c\,2^{n-1})$ for all $v\in S'$.

Let $\alpha<\alpha_1(\delta)$.
We denote by $\Phi_n$ the collection of measurable subsets of $\Upsilon$
defined by Proposition \ref{p:dual} with $\epsilon_v=\epsilon_v^{(n)}$, $v\in S'$.
We consider the sequence of functions on $\Upsilon$ defined by
$$
\phi_n := c_n 1_{\Phi_n}\quad\hbox{with}\quad c_n=2^{(\mathfrak{a}+\delta)n}
R_n^{\alpha-d}.
$$
Since according to (\ref{eq:llow}), we have $\mu(\Phi_n)\gg R_n^{d}$
and $\alpha<\alpha_1(\delta)$, it follows that
\begin{equation}\label{newsum2}
\sum_{n\ge 1}\int_{\Upsilon} \phi_n\,d\mu=\sum_{n\ge 1} c_n\mu(\Phi_n)=\infty.
\end{equation}

Let  $\beta_n$ denote the Haar-uniform probability measure supported on $B_{2^n}$
and 
$$
F_{k} := \sum_{n \geq k} \left|\pi_{V_K\backslash S}(\beta_n)\phi_{n} - \int_{\Upsilon} \phi_n\,d\mu \right|.
$$
We claim that $F_k$ is $L^2$-integrable for sufficiently small $\delta>0$ and $\alpha<\alpha_1(\delta)$
sufficiently close to $\alpha_1(\delta)$.
To verify this, we observe that by Theorem \ref{cor:mean_simply connected} and (\ref{eq:llow}), for every $\delta>0$,
\begin{align*}
\| F_k \|_2 
&\ll_{\delta} \sum_{n \geq k} m_{V_K\backslash S}(B_{2^n})^{-1/\mathfrak{q}+\delta}\|\phi_n\|_{2}\\
&= \sum_{n \geq k} m_{V_K\backslash S}(B_{2^n})^{-1/\mathfrak{q}+\delta}c_n\mu(\Phi_n)^{1/2}\\
&\le \sum_{n \geq k} m_{V_K\backslash S}(B_{2^n})^{-1/\mathfrak{q}+\delta}2^{(\mathfrak{a}+\delta)n}
R_n^{\alpha-d/2}.
\end{align*}
By \cite[Lem.~6.1]{GGN1}, for every $\delta > 0$ and
sufficiently large $n$,
$$
m_{V_K \backslash S}(B_{2^n}) \gg 2^{(\mathfrak{a} - \delta)n}.
$$
Hence, for sufficiently large $k$,
\begin{align*}
\| F_k \|_2 
&\ll_\delta \sum_{n \geq k} 2^{(a(1-1/\mathfrak{q})+\theta)n} R_n^{\alpha-d/2}\\
&= \sum_{n \geq k} 2^{(\mathfrak{a}(1-1/\mathfrak{q})+2\theta)n} R_n^{\alpha-d/2}\cdot 2^{-\theta n},
\end{align*}
where $\theta=\theta(\delta)$ satisfies $\theta(\delta)\to 0^+$ as $\delta\to 0^+$.
We apply H\"older's inequality to the above sum with the exponents
$$
r=\mathfrak{a}/(\mathfrak{a}(1-1/\mathfrak{q})+2\theta)\quad\hbox{and}\quad \bar r=(1-1/r)^{-1}.
$$
Note that when $\delta$ is sufficiently small, we have $r>1$. Then we obtain
\begin{align}\label{eq:last}
\| F_k \|_2 
&\ll_\delta
\left(\sum_{n \geq k} 2^{\mathfrak{a} n} R_n^{r(\alpha-d/2)}\right)^{1/r}
\cdot \left( \sum_{n \geq k} 2^{-\theta \bar r n}\right)^{1/\bar r}.
\end{align}
Since $\alpha_0>\mathfrak{q}d/2$, 
$$
\frac{\alpha_0-d/2}{1-1/\mathfrak{q}}>\alpha_0.
$$
This implies that 
$$
\frac{\mathfrak{a}(\alpha-d/2)}{\mathfrak{a}(1-1/\mathfrak{q})+2\theta}>\alpha
$$
for every sufficiently small $\delta>0$ and every $\alpha\ge \alpha_0$.
Since $\alpha_1(\delta)\ge \alpha_0$, it follows that for $\alpha<\alpha_1(\delta)$
sufficiently close to $\alpha_1(\delta)$ and
for sufficiently small $\delta>0$, we also have 
$$
r(\alpha-d/2)=\frac{\mathfrak{a}(\alpha-d/2)}{\mathfrak{a}(1-1/\mathfrak{q})+2\theta}>\alpha_1(\delta).
$$
This implies that the series in \eqref{eq:last} converges, and we 
conclude that $F_k$ is $L^2$-integrable. 

Let $\delta>0$ and  $\alpha<\alpha_1(\delta)$ be such that $F_k$ is $L^2$-integrable.
We consider a sequence of subsets 
$$ \Upsilon_n := \{ \varsigma \in \Upsilon~:~ B^{-1}_{2^n} \varsigma \cap \Phi_n = \emptyset\}$$
of $\Upsilon$. Note that by \eqref{newsum2}, on the set $\bigcap_{n \geq k} \Upsilon_n$,
$$
F_{k} = \sum_{n \geq k} \int_{\Upsilon} \phi_n\,d\mu=\infty.
$$
Since $F_k$ is $L^2$-integrable, it follows that
$\mu\left(\bigcap_{n \geq k} \Upsilon_n\right) = 0$ and the set
$\Upsilon_\infty := \liminf (\Upsilon_n)$ also has measure zero.
We denote by  $\widetilde{\Upsilon}_{\infty}$ the pre-image of $\Upsilon_\infty$ in $G_{V_K}$.
Then $m(\widetilde{\Upsilon}_\infty) = 0$. 

Let $\Omega$ be a compact subset of $G_S$, and
$$
\Omega'=\{x\in \Omega:\, \exists~ y\in U:\, (y,x^{-1}) \notin
\widetilde{\Upsilon}_\infty \}.
$$
Since
$$
(U\times (\Omega\backslash \Omega')^{-1})\subset \widetilde{\Upsilon}_\infty,
$$
and $U$ has positive measure,
it follows that the set $\Omega\backslash \Omega'$ has measure zero.
Let us take an exhaustion $G_S=\cup_{j\ge 1} \Omega_j$ of $G_{S}$ by compact sets.
Then $\cup_{j\ge 1} \Omega_j'$ is a subset of $G_{S}$ of full measure.
Hence, it suffices to show that given a compact subset $\Omega$ of $G_S$,
almost every element of $\Omega'$ is contained in $\mathcal{W}(G_S,{\rm G}(K),\Psi)$. 

For $x\in \Omega'$, there exists $y\in U$ such that
$\tilde \varsigma:=(y,x^{-1})\notin \widetilde{\Upsilon}_\infty$.
Then
$$
\tilde \varsigma{\rm G}(K)\notin \Upsilon_\infty = \liminf (\Upsilon_n),
$$
and $\tilde \varsigma{\rm G}(K)\notin \Upsilon_n$ for infinitely many $n$.
This means that for infinitely many $n$, we have 
$$
B_{2^n}^{-1}\tilde \varsigma{\rm G}(K) \cap \Phi_n \ne \emptyset,
$$
and
$$
(U^{-1}B_{2^n})^{-1}(e,x){\rm G}(K) \cap \Phi_n \ne \emptyset.
$$
Now we are in position to apply Proposition \ref{p:dual}. It follows that 
for infinitely many $n$, there exists $z_n\in {\rm G}(K)$ such that
$$
\height(z_n)\le c_0 \sup_{b\in U^{-1}B_{2^n}} \height(b)\le c_0c_1\, 2^n=c\, 2^{n-1},
$$
and
\begin{align*}
&\|x_v-z_n\|_v\le \epsilon_v^{(n)}\le \psi_v(c\,2^{n-1}) \le \psi_v(\height(z_n)) \quad\hbox{for all $v\in S'$,}\\
&\|x_v-z_n\|_v\le 1\quad\hbox{for all $v\in S\backslash S'$.}
\end{align*}
Recall that $\psi_v(t)\to 0$ as $t\to \infty$ for at least one $v$.
Since then
$$
\|x_v-z_n\|_v\le  \psi_v(c\,2^{n-1})\to 0,
$$
we conclude that if $x_v\notin {\rm G}(K)$, then the set $\{z_n\}$ must be infinite.
This proves that almost every element in $\Omega'$ belongs to $\mathcal{W}(G_S,{\rm G}(K),\Psi)$,
and finishes the proof of Theorem \ref{th:main1}.

\section{Homogeneous Varieties } \label{sec:variety}

In this section, ${\rm X}\subset A^n$ is a quasi-affine algebraic variety defined over a number field
$K$ and equipped with 
a transitive action of a connected almost simple algebraic group ${\rm G}\subset\hbox{GL}_n$
defined over $K$. Because ${\rm G}$ is not assumed to be simply connected,
it usually has nontrivial automorphic characters, and the behaviour of
the averaging operators is more subtle than in Theorem \ref{cor:mean_simply connected}. 

Let $\mathcal{X}_{aut}(G_{V_K})$ be the set of automorphic characters of $G_{V_K}$,
namely the set consisting of continuous unitary characters $\chi$ of $G_{V_K}$ such that
$\chi({\rm G}(K))=1$.  Let $S'$ be a finite subset of $V_K$.
For an open subgroup $U$ of $G_{V^f_K\backslash S'}$, we denote by $\mathcal{X}_{aut}(G_{V_K})^U$ the subset of
$\mathcal{X}_{aut}(G_{V_K})$ consisting of $U$-invariant characters. 
By \cite[Lem.~4.4]{GGN1}, the set $\mathcal{X}_{aut}(G_{V_K})^U$ is finite.
We denote by $G^U$ the kernel of $\mathcal{X}_{aut}(G_{V_K})^U$ in $G_{V_K}$. 
Then $G^U$ is a finite index subgroup in $G_{V_K}$ (see \cite[Lem.~4.4]{GGN1}), which clearly contains $G(K)$. 

The following theorem describes the asymptotic behavior of the averaging operators acting in $L^2(\Upsilon)$
for ${\rm G}$ which is not necessarily simply connected.

\begin{Theorem}[\cite{GGN1}, Th.~4.5]\label{c:mean}
Let $S$ be a subset of $V_K$ and  $S'$ a finite subset of $S$.
Let $U^0$ be a finite index subgroup of $U_{V_K^f\cap (S\backslash S')}$
and $U=U_{V_K^f\backslash S}U^0$.
Let $B$ be a bounded measurable subset of $G_{V_K\backslash S}\cap G^U$
which is $U_{V_K^f\backslash S}$-biinvariant and
$\beta$ the Haar-uniform probability measure supported on 
the subset $U^0B$ of $G_{(V_K\backslash S)\cup (V^f_K\backslash S')}$.
Then for every $\phi\in L^2(\Upsilon)$ such that 
$\supp(\phi)\subset G^U/{\rm G}(K)$, we have
$$
\left\|\pi_{(V_K\backslash S)\cup (V^f_K\backslash S')}(\beta)\phi -\left(\int_\Upsilon \phi\,
    d\mu\right)\xi_U \right\|_2\ll_{\delta}
m_{V_K\backslash S}(B)^{-\frac{1}{\mathfrak{q}_{V_K\backslash S}({\rm G})}+\delta}\|\phi\|_2
$$
for every $\delta>0$, where $\xi_U$ is the function on $\Upsilon$
such that $\xi_U=|G_{V_K}:G^{U}|$ on the open set $G^{U}/{\rm G}(K)\subset \Upsilon$
and $\xi_U=0$ otherwise.
\end{Theorem}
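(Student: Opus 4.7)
The strategy is to exploit the spectral decomposition
$L^2(\Upsilon)=H_{\mathrm{char}}\oplus L^2_{00}(\Upsilon)$,
where $H_{\mathrm{char}}$ is the closed span of all automorphic characters of $G_{V_K}$, and to write $\phi=\phi_{\mathrm{char}}+\phi_{00}$. The point is that the main term $(\int\phi\,d\mu)\xi_U$ should come entirely from the finite-dimensional subspace spanned by $\mathcal{X}_{aut}(G_{V_K})^U$, while the error bound of size $m_{V_K\backslash S}(B)^{-1/\mathfrak{q}_{V_K\backslash S}({\rm G})+\delta}$ should come from $\phi_{00}$ by the same Kunze--Stein-type argument that drives Theorem \ref{cor:mean_simply connected}.

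For the character part, I would compute $\pi(\beta)\chi$ for each automorphic character $\chi$ directly. Since $U^0\subset G_{V_K^f\cap(S\backslash S')}$ and $B\subset G_{V_K\backslash S}$ lie in commuting factors, $\beta$ factors as the product of the Haar probability measures on $U^0$ and on $B$. If $\chi|_{U^0}\neq 1$ the $U^0$-integration annihilates $\chi$; if $\chi|_{U^0}=1$ but $\chi|_{U_{V_K^f\backslash S}}\neq 1$ then the $U_{V_K^f\backslash S}$-biinvariance of $B$ forces $\int_B\chi\,dm=0$. Hence only $\chi\in\mathcal{X}_{aut}(G_{V_K})^U$ survive. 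For each such $\chi$, the hypothesis $B\subset G^U$ gives $\chi|_B\equiv 1$, so $\pi(\beta)\chi=\chi$. Because $\supp(\phi)\subset G^U/{\rm G}(K)$ and $\chi|_{G^U}\equiv 1$, the Fourier coefficient $\langle\phi,\chi\rangle$ equals $\int_\Upsilon\phi\,d\mu$ for every surviving $\chi$. Summing and using the Pontryagin duality identity $\sum_{\chi\in\mathcal{X}_{aut}(G_{V_K})^U}\chi=\xi_U$ on the finite abelian quotient $G_{V_K}/G^U$, I conclude
$$
\pi(\beta)\phi_{\mathrm{char}}=\Bigl(\int_\Upsilon\phi\,d\mu\Bigr)\xi_U.
$$

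For the orthogonal part $\phi_{00}$, I would reduce to Theorem \ref{cor:mean_simply connected}. The $U^0$-averaging is an orthogonal projection, hence non-expansive, so it suffices to bound the operator norm of $\pi_{V_K\backslash S}(\beta_B)$ on $L^2_{00}$, where $\beta_B$ is the Haar-uniform measure on $B$. This is exactly the estimate supplied by the spherical integrability exponent $\mathfrak{q}_{V_K\backslash S}({\rm G})$: matrix coefficients of $U_{V_K\backslash S}$-invariant vectors in $L^2_{00}$ lie in $L^{\mathfrak{q}_{V_K\backslash S}({\rm G})+\delta}(G_{V_K\backslash S})$ by definition of the exponent, and a Kunze--Stein-style convolution estimate applied to $\beta_B*\beta_B^*$ yields
$$
\|\pi(\beta)\phi_{00}\|_2\ll_\delta m_{V_K\backslash S}(B)^{-1/\mathfrak{q}_{V_K\backslash S}({\rm G})+\delta}\|\phi\|_2,
$$
as in the simply connected case. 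Combining the two contributions gives the theorem.

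The main obstacle is the bookkeeping around the character piece: I must verify that no automorphic character outside $\mathcal{X}_{aut}(G_{V_K})^U$ leaks into the main term, which boils down to the two separate vanishing arguments above and crucially uses both the assumption $B\subset G^U$ (to get $\pi(\beta)\chi=\chi$ rather than a shrinking scalar multiple) and the $U_{V_K^f\backslash S}$-biinvariance of $B$. Identifying $\sum_\chi\chi$ with $\xi_U$ requires recognising $\mathcal{X}_{aut}(G_{V_K})^U$ as the Pontryagin dual of the finite abelian group $G_{V_K}/(G^U\cdot{\rm G}(K))$ and invoking character orthogonality there. Once the character contribution is pinned down, the remaining analytic work is identical to the proof of Theorem \ref{cor:mean_simply connected}.
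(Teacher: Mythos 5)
This statement is quoted from \cite{GGN1}, Th.~4.5; the present paper does not supply its own proof, so there is no argument in the source to compare yours against. I can only assess the reconstruction on its own terms.

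Your decomposition $L^2(\Upsilon) = H_{\text{char}} \oplus L^2_{00}(\Upsilon)$ is the right frame, and the character block is handled correctly and completely. The product structure $\beta = \beta_{U^0}\otimes\beta_B$ annihilates characters nontrivial on $U^0$; the $U_{V^f_K\backslash S}$-biinvariance of $B$ annihilates characters trivial on $U^0$ but nontrivial on $U_{V^f_K\backslash S}$; and the hypothesis $B\subset G^U$ is exactly what makes each surviving $\chi\in\mathcal{X}_{aut}(G_{V_K})^U$ a genuine fixed vector of $\pi(\beta)$ rather than being contracted. The identification $\sum_{\chi\in\mathcal{X}_{aut}(G_{V_K})^U}\chi=\xi_U$ via orthogonality on the finite quotient $G_{V_K}/G^U$, and the use of $\supp(\phi)\subset G^U/{\rm G}(K)$ to equate every $\langle\phi,\chi\rangle$ with $\int_\Upsilon\phi\,d\mu$, are exactly the steps needed to produce the stated main term.

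The one place you overreach is the sentence claiming that, ``by definition of the exponent,'' matrix coefficients of $U_{V_K\backslash S}$-invariant vectors in $L^2_{00}$ lie in $L^{\mathfrak{q}_{V_K\backslash S}({\rm G})+\delta}(G_{V_K\backslash S})$. That is not definitional: $\mathfrak{q}_{V_K\backslash S}({\rm G})=(1+\sigma_{V_K\backslash S})\sup_{v}\mathfrak{q}_v({\rm G})$ is an arithmetic combination of local exponents, and passing from the local $L^{\mathfrak{q}_v}$-bounds to an estimate over the restricted product $G_{V_K\backslash S}$ requires the nontrivial analysis that produces the $\sigma$-factor (accounting for how the number of contributing non-Archimedean places grows with the size of $b\in B$). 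That analysis is the substance of the proof of Theorem \ref{cor:mean_simply connected} in \cite{GGN1}, and delegating to it is legitimate, but you should not present its conclusion as immediate from the definition. You should also flag, as part of that delegation, that each $\mathfrak{q}_v({\rm G})$ is already defined with respect to $L^2_{00}(\Upsilon)$ rather than $L^2_0$, which is why the spectral input survives the removal of the full character block in the non--simply-connected case; with that observation made explicit, the outline is sound.
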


The version of Proposition \ref{p:dual} for general homogeneous varieties is provided by

\begin{Prop}[\cite{GGN1}, Prop.~5.5]\label{p:dual2}
Fix $S\subset V_K$, finite $S'\subset S$, $x^0\in X_{S'}$, and a bounded subset $\Omega$ of $G_{S'}$.
Then there exist $\epsilon_0\in (0,1)$ and
 a family of measurable subset $\Phi_\epsilon$ of $\Upsilon$ indexed by
$\epsilon=(\epsilon_v)_{v\in S'}$ where $\epsilon_v\in I_v\cap (0,\epsilon_0)$ that satisfy
\begin{equation}\label{eq:low1}
\prod_{v\in S'} \epsilon_v^{r_v \dim ({\rm X})}\ll\mu(\Phi_\epsilon)\ll \prod_{v\in S'} \epsilon_v^{r_v \dim ({\rm X})}
\end{equation}
and the following property holds:

if for $B\subset G_{V_K\backslash S}\times \prod_{v\in V_K^f\cap (S\backslash S')} {\rm G}(O_v)$,
$\epsilon=(\epsilon_v)_{v\in S'}$ as above, and
$\varsigma:=(e,g^{-1}){\rm G}(K)\in \Upsilon$ with $g\in \Omega$,  we have
\begin{equation*}
B^{-1}\varsigma\cap \Phi_\epsilon\ne \emptyset,
\end{equation*}
then there exists $\gamma\in {\rm G}(O_{(V_K\backslash S)\cup S'})$ such that
\begin{equation}\label{eq:low2}
\height(\gamma)\le c_0\, \sup_{b\in B} \height(b)
\end{equation}
and for $x=gx^0\in X_{S'}$
\begin{equation*}
\|x_v-\gamma x^0_v\|\le \epsilon_v\quad\hbox{for all $v\in S'$.}
\end{equation*}
\end{Prop}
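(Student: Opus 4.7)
The plan is to mirror the construction of Proposition \ref{p:dual}, with the key modification that at places $v\in S'$ the neighborhoods are taken as preimages of $\epsilon_v$-balls in ${\rm X}(K_v)$ under the orbit map $\pi_{x^0_v}:G_v\to {\rm X}(K_v)$, $h\mapsto h\cdot x^0_v$. This substitutes $\dim({\rm X})$ for $\dim({\rm G})$ in the volume bound \eqref{eq:low1}, and is really the only arithmetic-geometric novelty over the group-variety case.

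For the construction, I would fix a compact neighborhood $K_v$ of $e$ in each $G_v$ ($v\in S'$) and set
$$
V_{\epsilon_v}:=\{h\in K_v:\, \|h\cdot x^0_v - x^0_v\|_v\le \epsilon_v\}.
$$
Since $\pi_{x^0_v}$ is a submersion at $e$ by non-singularity of ${\rm X}$ and transitivity, for $\epsilon_v\le\epsilon_0$ small the implicit function theorem gives local coordinates in which $m_v$ factors as $\lambda_v\otimes m_v^{H_v}$ up to bounded Jacobian, where $H_v=\operatorname{Stab}_{G_v}(x^0_v)$. This yields $m_v(V_{\epsilon_v})\asymp \epsilon_v^{r_v\dim({\rm X})}$. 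Then I would set
$$
\widetilde\Phi_\epsilon:=\Big(\prod_{v\in S'} V_{\epsilon_v}\Big)\times \Xi\subset G_{V_K},
$$
with $\Xi$ a fixed compact neighborhood of $e$ in $G_{V_K\backslash S'}$ chosen inside a level subgroup $G^U$ (as in Theorem \ref{c:mean}) and satisfying $\Xi_v\subset {\rm G}(O_v)$ at all non-Archimedean $v\in S\backslash S'$. Define $\Phi_\epsilon\subset \Upsilon$ as the image of $\widetilde\Phi_\epsilon$ under the quotient map. Discreteness of ${\rm G}(K)$ in $G_{V_K}$ makes this map injective on $\widetilde\Phi_\epsilon$ for $\epsilon_0$ small and $\Xi$ fixed, giving both bounds in \eqref{eq:low1}.

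For the duality, a nonempty intersection $B^{-1}\varsigma\cap \Phi_\epsilon\ne\emptyset$ would produce $b\in B$, $\phi\in \widetilde\Phi_\epsilon$ and $\gamma\in {\rm G}(K)$ with $b^{-1}(e,g^{-1})\gamma=\phi$, so $\gamma=(e,g)\,b\,\phi$. Reading off components: at $v\in S'$, $\gamma_v=g_v\phi_v$ with $\phi_v\in V_{\epsilon_v}$, so
$$
\|\gamma_v\cdot x^0_v - x_v\|_v=\|g_v(\phi_v\cdot x^0_v)-g_v\cdot x^0_v\|_v\le L(\Omega)\,\epsilon_v,
$$
with $L(\Omega)$ a Lipschitz constant of the action restricted to $\Omega$, absorbed into $\epsilon_0$; at $v\in V_K\backslash S$, $\gamma_v\in b_v\Xi_v$ yields \eqref{eq:low2}; at $v\in V_K^f\cap(S\backslash S')$, $\gamma_v\in {\rm G}(O_v)\cdot\Xi_v\subset {\rm G}(O_v)$, placing $\gamma\in {\rm G}(O_{(V_K\backslash S)\cup S'})$.

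The main obstacle will be the volume estimate $m_v(V_{\epsilon_v})\asymp \epsilon_v^{r_v\dim({\rm X})}$ with constants uniform as $x^0_v$ varies over the projection of $\Omega\cdot x^0$: this needs a version of the inverse function theorem that behaves uniformly across both Archimedean and non-Archimedean places, together with control of the Jacobian of the orbit map in terms of the chosen differential form. A secondary difficulty is injectivity of $\widetilde\Phi_\epsilon\to\Upsilon$: this fails unless $\epsilon_0$ is smaller than the injectivity radius of ${\rm G}(K)\curvearrowright G_{V_K}$ at points lying over $\Omega\cdot x^0$, which is where the discreteness and finite covolume of ${\rm G}(K)$ in $G_{V_K}$ enter.
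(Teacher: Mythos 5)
Your construction is essentially the intended one: at $v\in S'$ you replace the small identity neighborhoods in $G_v$ from Proposition~\ref{p:dual} by tubular neighborhoods of the stabilizer of $x^0_v$, i.e.\ preimages of $\epsilon_v$-balls under the orbit map, and the local submersion statement (valid over each $K_v$) turns $\dim({\rm G})$ into $\dim({\rm X})$ in the volume count; the duality computation $\gamma=(e,g)b\phi$ and the Lipschitz rescaling that absorbs the $g$-dependence into $\epsilon_0$ are both correct. That said, your discussion of the two ``difficulties'' is slightly off. Uniformity of the inverse function theorem across places is a non-issue because $S'$ is finite. More importantly, the injectivity of $\widetilde\Phi_\epsilon\to\Upsilon$ is not a matter of taking $\epsilon_0$ below an ``injectivity radius'': $\widetilde\Phi_\epsilon$ contains the entire compact piece $H_v\cap K_v$ of the stabilizer, which does not shrink as $\epsilon_v\to 0$, so no choice of $\epsilon_0$ alone helps. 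What one actually does is fix the compact sets $K_v$ ($v\in S'$) and $\Xi$ small enough once and for all so that $\bigl(\prod_{v\in S'}K_v\times\Xi\bigr)^{-1}\bigl(\prod_{v\in S'}K_v\times\Xi\bigr)\cap{\rm G}(K)=\{e\}$, which is possible by discreteness and then works simultaneously for all $\epsilon$. (In fact, injectivity is more than you need: since $\widetilde\Phi_\epsilon$ sits in a fixed compact set, the quotient map is at worst $N$-to-one with $N$ independent of $\epsilon$, and that already gives the lower bound in \eqref{eq:low1}.) With that correction the argument goes through and matches the construction in \cite{GGN1}.
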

The upper bound in \eqref{eq:low1} was not stated in \cite{GGN1}, but it follows from 
the explicit construction of the sets $\Phi_\epsilon$.

\begin{Lemma}\label{l:discrete}
Let $S$ be a subset of $V_K$ and $S'$ a finite subset of $S$ containing the Archimedean places of $S$.
Suppose that the set $X_{S,S'}\cap {\rm X}(K)$, embedded in $X_{S'}$, is not discrete.
Then the group ${\rm G}$ is isotropic over $V_K\backslash S$, and 
$\overline{X_{S,S'}\cap {\rm X}(K)}$ is open in $X_{S'}$.
\end{Lemma}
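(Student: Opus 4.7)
The plan is to handle the two assertions in order.

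For the isotropy claim I would argue by contrapositive. Assume ${\rm G}$ is anisotropic over $V_K\backslash S$, so that $G_{V_K\backslash S}$ is compact. Letting $H$ denote the stabilizer of a base point $x^0\in{\rm X}$, the set ${\rm X}(K_v)$ for $v\in V_K\backslash S$ decomposes into finitely many ${\rm G}(K_v)$-orbits (since $H^1(K_v,H)$ is finite for the local field $K_v$), each the continuous image of the compact group $G_v$, hence compact. Consequently $\|z\|_v$ is uniformly bounded for all $z\in{\rm X}(K)$ and all $v\in V_K\backslash S$. For $z\in X_{S,S'}\cap{\rm X}(K)$ projecting into a fixed bounded region of $X_{S'}$, the height $\height(z)=\prod_v\max(1,\|z\|_v)$ is then uniformly bounded: bounded at $v\in S'$ by the bounded region, at most $1$ at $v\in S\backslash S'$ by definition of $X_{S,S'}$, and bounded at $V_K\backslash S$ by the compactness above. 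Northcott's theorem (applied through the embedding ${\rm X}(K)\subset A^n(K)\hookrightarrow \mathbb P^n(K)$) then yields finitely many such $z$, so the set is locally finite, hence discrete in $X_{S'}$, contradicting the hypothesis.

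For openness of the closure, now assume ${\rm G}$ is isotropic over $V_K\backslash S$ and fix a point $z\in X_{S,S'}\cap{\rm X}(K)$. Set
$$
\Gamma:=\bigl\{\gamma\in{\rm G}(K)\,:\,\|\gamma\|_v\le 1\text{ and }\|\gamma^{-1}\|_v\le 1\text{ for every }v\in S\backslash S'\bigr\},
$$
so that $\Gamma$ preserves the integrality constraint at $S\backslash S'$ and $\Gamma\cdot z\subset X_{S,S'}\cap{\rm X}(K)$. By strong approximation (applied to the simply connected cover of ${\rm G}$, which is isotropic at some $v_0\in V_K\backslash S$, together with the finiteness of the automorphic character group from \cite[Lem.~4.4]{GGN1}), the projection of $\Gamma$ into $G_{S'}$ is dense in an open subgroup $G_{S'}^0$ of finite index in $G_{S'}$. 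Because the transitive ${\rm G}$-action on ${\rm X}$ is smooth, the orbit $G_{S'}^0\cdot z$ is an open subset of $X_{S'}$, and by density it is contained in $\overline{\Gamma\cdot z}\subset\overline{X_{S,S'}\cap{\rm X}(K)}$.

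To pass from ``open neighborhood at every rational point'' to ``globally open'', I would observe that the open subgroup $G_{S'}^0$ acts on $X_{S'}$ with open orbits that partition the space. If $w\in\overline{X_{S,S'}\cap{\rm X}(K)}$ and $z_n\in X_{S,S'}\cap{\rm X}(K)$ with $z_n\to w$, then the openness of $G_{S'}^0\cdot w$ forces $z_n\in G_{S'}^0\cdot w$ for $n$ large, so $G_{S'}^0\cdot z_n=G_{S'}^0\cdot w$, giving $w$ an open neighborhood inside $\overline{X_{S,S'}\cap{\rm X}(K)}$ by the previous paragraph. The main obstacle is the strong approximation step when ${\rm G}$ is not simply connected: one has to lift to the simply connected cover and control the resulting finite quotient, yielding density modulo a finite-index open subgroup rather than in all of $G_{S'}$; fortunately, because such open subgroups act with open orbits on $X_{S'}$, this weaker density still suffices.
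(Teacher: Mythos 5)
Your proof follows a parallel structure to the paper's but diverges in both halves, and there is one genuine gap in the first half.

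For the isotropy claim, the paper shows ${\rm X}(K)$ is discrete in $X_{V_K}$ and that $X_{V_K\backslash S}$ would be compact under anisotropy, so ${\rm X}(K)\cap(X_{V_K\backslash S}\times D)$ is finite. You bound heights and invoke Northcott instead. The gap is in the step ``consequently $\|z\|_v$ is uniformly bounded for all $v\in V_K\backslash S$, so $\height(z)$ is uniformly bounded.'' Compactness of each ${\rm X}(K_v)$ gives a bound $C_v$ per place, but you still need $\prod_{v\in V_K\backslash S}\max(1,C_v)<\infty$. This is not automatic: a priori infinitely many $C_v$ could exceed $1$. The paper closes this by citing \cite[Th.~6.7]{PlaRa}: a simple group over a number field is anisotropic at only finitely many places, so anisotropy over all of $V_K\backslash S$ forces $V_K\backslash S$ to be finite, making the product trivially finite. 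Without this observation (or some equivalent, such as ${\rm X}(K_v)\subseteq O_v^n$ for almost all $v$), the Northcott step doesn't quite land.

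For openness, the paper simply invokes \cite[Lem.~6.3]{GGN1}, which says $\overline{{\rm X}(O_{(V_K\backslash S)\cup S'})}$ in $X_{S'}$ is a finite union of open $p(\tilde G_{S'})$-orbits. Your argument via the $S$-arithmetic group $\Gamma={\rm G}(O_{(V_K\backslash S)\cup S'})$, strong approximation for the simply connected cover, finiteness of the automorphic character group, and the open-orbit partition of $X_{S'}$ essentially re-derives that lemma in the flow of the proof. The final step — a limit point $w$ of rational points $z_n$ must eventually share the open $G_{S'}^0$-orbit with the $z_n$, and so inherits an open neighbourhood in the closure — is a clean way to pass from local openness at rational points to openness of the closure, and matches the structure of the cited lemma's conclusion. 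Your route is more self-contained; the paper's is shorter because the heavy lifting is outsourced.
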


\begin{proof}
We first show that $X_{V_K\backslash S}$ must be noncompact. Indeed, 
since $X_{S,S'}\cap {\rm X}(K)$ is not discrete in $X_{S'}$, there exists
a bounded subset $D$ of $X_{S,S'}$ which contains infinitely many elements of 
$X_{S,S'}\cap {\rm X}(K)$. On the other hand, ${\rm X}(K)$ is discrete in $X_{V_K}$,
and if $X_{V_K\backslash S}$ were compact,
then there would have been only finitely many elements of ${\rm X}(K)$ contained in
$X_{V_K\backslash S}\times D$, which gives a contradiction.

If ${\rm G}$ is anisotropic over 
$V_K\backslash S$, then $V_K\backslash S$ is finite by \cite[Th.~6.7]{PlaRa},
and $G_{V_K\backslash S}$ is compact by \cite[Th.~3.1]{PlaRa}.
It follows from finiteness of Galois cohomology over local fields \cite[Th.~6.14]{PlaRa}
that $X_{V_K\backslash S}$ consists of finitely many orbits of $G_{V_K\backslash S}$. 
Then $X_{V_K\backslash S}$ is compact contradicting the previous paragraph.
This shows that ${\rm G}$ must be isotropic over $V_K\backslash S$.

To describe the structure of $\overline{X_{S,S'}\cap {\rm X}(K)}$ in $X_{S'}$,
we note that 
$$
X_{S,S'}\cap {\rm X}(K)= {\rm X}(O_{(V_K\backslash S)\cup S'} ).
$$
Let $p: \tilde {\rm G}\to {\rm G}$ denote the simply connected cover of ${\rm G}$.
By \cite[Lem.~6.3]{GGN1}, the closure $\overline{{\rm X}(O_{(V_K\backslash S)\cup S'})}$ in $X_{S'}$
is a union of finitely many open orbits of $p(\tilde G_{S'})$. This proves the lemma.
\end{proof}

\vspace{0.2cm}

\subsection*{Proof of Theorem \ref{th:main2}}
The assumption (\ref{eq:main4}) implies that ${\rm X}(K)\cap D$ is infinite.
In particular, it follows that the set $X_{S,S'}\cap {\rm X}(K)$, embedded in $X_{S'}$, is not discrete,
and by Lemma \ref{l:discrete}, 
$$
\overline{X_{S,S'}\cap {\rm X}(K)}=\overline{{\rm X}(O_{(V_K\backslash S)\cup S'} )},
$$
is open in $X_{S'}$. Moreover, the closure 
$\overline{{\rm X}(O_{(V_K\backslash S)\cup S'})}$ in $X_{S'}$
is a union of finitely many open orbits of $p(\tilde G_{S'})$.
Therefore, it suffices to show that 
for $x^0\in {\rm X}(O_{(V_K\backslash S)\cup S'})$, almost all points in $p(\tilde G_{S'})x^0$
are approximable. Moreover, it suffices to show that for every compact subset $\Omega$ of 
$p(\tilde G_{S'})$, almost all points in $\Omega x^0$ are approximable.
From now on we fix such $\Omega$ and $x^0$.

If $\psi_v(t)\nrightarrow 0$ as $t\to \infty$ for all $v\in S'$, then
the claim of the theorem follows from density.
Hence, we assume that $\psi_v(t)\rightarrow 0$ as $t\to \infty$ for at least one $v\in S'$.

As in the proof of Theorem \ref{th:main1}, we may assume, without loss of generality,
that $\hbox{Im}(\psi_v)\subset I_v\cap (0,\epsilon_0)$
with notation as in Proposition \ref{p:dual2}. 

We set 
$$
U^0=\prod_{v\in V_K^f\cap (S\backslash S')} (U_v\cap {\rm G}(O_v))\quad\hbox{and}\quad U=U_{V_K^f\backslash S}U^0.
$$
Since both $U_v$ and ${\rm G}(O_v)$ are open and compact in $G_v$, it follows
that the subgroup $U_v\cap {\rm G}(O_v)$ has finite index in $U_v$.
Hence, since $U_v={\rm G}(O_v)$ for almost all $v$,
$U^0$ is a finite index subgroup in $U_{V_K^f\cap (S\backslash S')}$.
Then $U$ is a finite index subgroup in $U_{V_K^f\backslash S'}$.
Recall that $G^U$ denotes the kernel of $U$-invariant automorphic characters of $G_{V_K}$.
It contains ${\rm G}(K)$ and has finite index in $G_{V_K}$ (see \cite[Lem.~4.4]{GGN1}).
We note that $p(\tilde G_{S'})\subset G^U$ because $\tilde G$ has no nontrivial automorphic characters
(see \cite[Lem.~4.1]{GGN1}).
We also fix a compact neighbourhood $U^\prime$ of identity in $G_{V_K^\infty\backslash S'}$ contained in $G^U$.
Then $UU^\prime$ is a neighbourhood of identity in $G_{V_K\backslash S'}$.
Let
$$
B_h := U_{V_K\backslash S}\{g \in G_{V_K \backslash S}~:~ \height(g) \le h\}U_{V_K\backslash S},
$$
and 
$$
B_h':=U^0(B_h\cap G^U).
$$
Since $U$, $U^\prime$, and $U_{V_K\backslash S}$ are compact,
there exists $c_1\ge 1$ such that
\begin{equation}\label{eq:c1}
\sup_{b\in (UU^\prime)^{-1}B_h'} \height(b)\le c_1\, h.
\end{equation}
There exists $c_2=c_2(x^0)\ge 1$ such that 
\begin{equation}\label{eq:c2}
\height(\gamma x^0)\le c_2\, \height(\gamma).
\end{equation}

To simply notation, we set 
\begin{equation}\label{eq:notation}
d:=\dim({\rm G}),\;\; \mathfrak{q}:=\mathfrak{q}_{V_K \backslash S}({\rm G}),\;\;
\mathfrak{a}:=\mathfrak{a}_{S,S'}({\rm X}),\;\; \mathfrak{a}_0:=\mathfrak{a}_{S}({\rm G}).
\end{equation}


Since the function $\psi_S$ is nonincreasing, we deduce from \eqref{eq:main4} that
$$
\sum_{n = 1}^{\infty} |{\rm X}(K)\cap D \cap\{2^{n-1}< \height \le 2^n\}|\cdot \psi_S(2^{n-1})^{\alpha} = \infty,
$$
and from the definition of $\mathfrak{a}$, we also get
\begin{equation}\label{newsum1_1}
\sum_{n = 1}^{\infty} 2^{(\mathfrak{a}+\delta)n}\psi_S(2^{n-1})^{\alpha} = \infty
\end{equation}
for every $\delta>0$. 
Since $0< \psi_S\le 1$, there exists $\alpha_0(\delta)\in [0,\infty]$ such that series (\ref{newsum1_1})
converges for all $\alpha>\alpha_0(\delta)$
and diverges for all $\alpha<\alpha_0(\delta)$. 
We fix $\alpha_0>\mathfrak{a}\mathfrak{a}_0^{-1}\mathfrak{q}d/2$ such that series \eqref{eq:main4} diverges.
Since divergence in \eqref{eq:main4} implies divergence in (\ref{newsum1_1}),
we have $\alpha_0(\delta)\ge \alpha_0$. 

Since $\psi_S$ is monotone, \eqref{newsum1_1} is equivalent to
\begin{equation}\label{newsum1_1_1}
\sum_{n = 1}^{\infty} 2^{(\mathfrak{a}+\delta)n}\psi_S(c2^{n-1})^{\alpha} = \infty
\end{equation}
with any $c>0$. We choose $c=c_0 c_1 c_2/2$ where $c_0$ is as in (\ref{eq:low2})
and $c_1,c_2$ as in \eqref{eq:c1}, \eqref{eq:c2}.

As in the proof of Theorem \ref{th:main1}, we make a reduction to the case when
$\alpha_0(\delta)<\infty$. Let $\alpha_0>\mathfrak{a}\mathfrak{a}_0^{-1}\mathfrak{q}d/2$ be such that series (\ref{newsum1_1_1})
diverges. We define $R_n$ as in the proof of Theorem \ref{th:main1}. 
Then $R_n\le \psi_S(c\,2^{n-1})$,  $R_n\ll 2^{-\kappa n}$ with some $\kappa>0$, and 
\begin{equation}\label{newsum1111}
\sum_{n = 1}^{\infty} 2^{(\mathfrak{a}+\delta)n}R_n^{\alpha} = \infty
\end{equation}
for $\alpha=\alpha_0$. 
Since $R_n\ll 2^{-\kappa n}$, series (\ref{newsum1111}) converges for sufficiently large $\alpha$.
There exists $\alpha_1(\delta)\in [\alpha_0,\infty)$ such that series (\ref{newsum1111}) converges
for all $\alpha>\alpha_1(\delta)$ and diverges for all $\alpha<\alpha_1(\delta)$.
Since $\psi_S(t)=\prod_{v\in S'} \psi_v(t)^{r_v}$ and $R_n\le \psi_S(c\,2^{n-1})$,
we may write 
$$
R_n=\prod_{v\in S'} (\epsilon_v^{(n)})^{r_v},
$$
where $\epsilon_v^{(n)}\le \psi_v(c\,2^{n-1})$ for $v\in S'$.

Let $\beta'_n$ be the Haar-uniform probability measure supported on $B_n'$. 
By Theorem \ref{c:mean}, the averages along $\beta'_n$ satisfy the mean ergodic theorem:
for every $\phi\in L^2(\Upsilon)$ such that $\supp(\phi)\subset G^U/{\rm G}(K)$,
\begin{align}\label{eq:mean}
&\left\|\pi_{(V_K\backslash S)\cup (V^f_K\backslash S')}(\beta_n')\phi -\left(\int_\Upsilon \phi\,
    d\mu\right)\xi_U \right\|_2 \nonumber \\
\ll_{\delta}\; &
m_{V_K\backslash S}(B_{2^n}\cap G^U)^{-1/\mathfrak{q}+\delta}\|\phi\|_2 
\end{align}
for every $\delta>0$, where $\xi_U$ is the function on $\Upsilon$
such that $\xi_U=|G_{V_K}:G^{U}|$ on $G^{U}/{\rm G}(K)\subset \Upsilon$
and $\xi_U=0$ otherwise.

Let $\Phi_{n}$ be a family of measurable subsets of $\Upsilon$ defined by Proposition \ref{p:dual2}
with $\epsilon_v=\epsilon_v^{(n)}$, $v\in S'$. We set 
$$
\phi_n:=c_n 1_{\Phi_n}\quad\hbox{with $c_n=2^{(\mathfrak{a}+\delta)n}R_n^{\alpha-d}$}.
$$

By construction of the sets $\Phi_n$ in Proposition \ref{p:dual2},
$\Phi_n$ is a neighbourhood of the identity coset in $\Upsilon$
with size determined by $\epsilon_v^{(n)}\le \psi_v(c\,2^{n-1})$ (see the proof of \cite[Prop.~5.5]{GGN1}).
Since the divergence condition (\ref{eq:main4}) 
is stable under rescaling of the functions $\psi_v$,
we may arrange that 
$$
\supp(\phi_n)=\Phi_n\subset G^U/{\rm G}(K).
$$
By \eqref{eq:low1},
\begin{equation}\label{eq:l2}
\|\phi_n\|_2=c_n\mu(\Phi_n)^{1/2}\ll 2^{(\mathfrak{a}+\delta)n}R_n^{\alpha-d/2}.
\end{equation}
Let
$$
F_{k} := \sum_{n \geq k} \left|\pi_{(V_K\backslash S)\cup (V_K^f\backslash S')}(\beta'_n)\phi_k - \left(\int_{\Upsilon} \phi_k~d\mu \right)\xi_U \right|.
$$
As in the proof of Theorem \ref{th:main1}, we claim that $F_k$ is $L^2$-integrable for sufficiently small 
$\delta>0$ and
$\alpha<\alpha_1(\delta)$ sufficiently close to $\alpha_1(\delta)$.
By \eqref{eq:mean} and \eqref{eq:l2}, for every $\delta>0$,
\begin{align*}
\|F_{k} \|_{2}    &\ll_{\delta}
\sum_{n \geq k} m_{V_K\backslash S}(B_{2^n}\cap G^U)^{-1/\mathfrak{q}+\delta}\|\phi_n\|_2\\
&\ll \sum_{n \geq k} m_{V_K\backslash S}(B_{2^n}\cap G^U)^{-1/\mathfrak{q}+\delta}2^{(\mathfrak{a}+\delta)n}R_n^{\alpha-d/2}.
\end{align*}
Moreover, by \cite[Lem.~6.1-6.2]{GGN1}, for every $\delta>0$ and sufficiently large $n$,
$$
m_{V_K\backslash S}(B_{2^n}\cap G^U)\gg m_{V_K\backslash S}(B_{2^n})\gg_\delta 2^{(\mathfrak{a}_0-\delta)n}.
$$
Therefore, for sufficiently large $k$,
\begin{align*}
\|F_{k} \|_{2}  &\ll_\delta \sum_{n\ge k} 2^{(\mathfrak{a}-\mathfrak{a}_0/\mathfrak{q}+\theta)n}R_n^{\alpha-d/2}\\
&= \sum_{n\ge k} 2^{(\mathfrak{a}-\mathfrak{a}_0/\mathfrak{q}+2\theta)n}R_n^{\alpha-d/2}\cdot 2^{-\theta n},
\end{align*}
where $\theta=\theta(\delta)$ satisfies $\theta(\delta)\to 0^+$ as $\delta\to 0^+$.
Now we apply H\"older's  inequality with the exponents
$$
r=\mathfrak{a}/(\mathfrak{a}-\mathfrak{a}_0/\mathfrak{q}+2\theta)\quad\hbox{and}\quad \bar r=(1-1/r)^{-1}.
$$
When $\delta$ is sufficiently small, $r>1$.
This gives
\begin{align}\label{eq:last1}
\|F_{k} \|_{2}  &\ll_\delta \left(\sum_{n\ge k} 2^{\mathfrak{a}n}R_n^{r(\alpha-d/2)}\right)^{1/r}
\cdot \left(\sum_{n\ge k} 2^{-\theta\bar rn}\right)^{1/\bar r}.
\end{align}
Since $\alpha_0>\mathfrak{a}\mathfrak{a}_0^{-1}\mathfrak{q}d/2$, it is easy to check that 
$$
\frac{\mathfrak{a}(\alpha_0-d/2)}{\mathfrak{a}-\mathfrak{a}_0/\mathfrak{q}}>\alpha_0.
$$
Moreover, it follows that for all sufficiently small $\delta>0$ and all $\alpha\ge \alpha_0$,
$$
\frac{\mathfrak{a}(\alpha-d/2)}{\mathfrak{a}-\mathfrak{a}_0/\mathfrak{q}+2\theta}>\alpha.
$$
Since $\alpha_1(\delta)\ge \alpha_0$, 
it follows that for $\alpha<\alpha_1(\delta)$ sufficiently close to $\alpha_1(\delta)$ and
for sufficiently small $\delta>0$,
$$
r(\alpha-d/2)=\frac{\mathfrak{a}(\alpha-d/2)}{\mathfrak{a}-\mathfrak{a}_0/\mathfrak{q}+2\theta}>\alpha_1(\delta).
$$
Hence, by the definition of $\alpha_1(\delta)$,
the series in \eqref{eq:last1} converges, which completes the proof that $F_k$ is $L^2$-integrable. 

We fix $\delta>0$ and $\alpha<\alpha_1(\delta)$ such that $F_k$ is $L^2$-integrable.
Let
$$
\Upsilon_n: = \{ \varsigma\in G^U/{\rm G}(K):\, (B_{2^n}')^{-1} \varsigma \cap \Phi_n = \emptyset\}.
$$
By the definition of $F_k$, on the set $\cap_{n\ge k}\Upsilon_n$,
$$
F_k=|G_{V_K}:G^U|\sum_{n\ge k}\int_\Upsilon \phi_n\, d\mu=|G_{V_K}:G^U|\sum_{n\ge k} c_n \mu(\Phi_n).
$$
Since by Proposition \ref{p:dual2}, $\mu(\Phi_n)\gg R_n^d$ and $\alpha<\alpha_1(\delta)$,
we conclude that
$$
F_k\gg \sum_{n\ge k} 2^{(\mathfrak{a}+\delta)n}\psi_S(c 2^{n-1})^{\alpha}=\infty
$$
on the set $\cap_{n\ge k}\Upsilon_n$. In particular, this shows that $\mu(\cap_{n\ge k} \Upsilon_n)=0$
and $\Upsilon_\infty:=\liminf(\Upsilon_n)$ also has measure zero.
Let $\widetilde\Upsilon_\infty$ be the preimage of $\Upsilon_\infty$ in $G^U$.
Then $m(\widetilde\Upsilon_\infty)=0$. Let
$$
\Omega'=\{g\in \Omega:\,\, \exists y\in UU^\prime:\, (y,g^{-1})\notin\widetilde\Upsilon_\infty\}.
$$
Then since
$$
(UU^\prime\times (\Omega\backslash \Omega')^{-1})\subset \widetilde\Upsilon_\infty,
$$
and $UU^\prime$ has positive measure in $G_{V_K\backslash S'}$, the set $\Omega\backslash \Omega'$ has measure
zero. Then $\Omega'x^0$ has full measure in $\Omega x^0$.

Finally, we show that almost every element of $\Omega'x^0$
belongs to the set $\mathcal{W}(X_{S'}, X_{S,S'}\cap {\rm X}(K),\Psi)$.
For $g\in \Omega'$, we set $\varsigma:=(e,g^{-1}){\rm G}(K)$.
There exists $y\in UU^\prime$ such that 
$$
y\varsigma\notin \Upsilon_\infty.
$$
This implies that for infinitely many $n$, $y\varsigma\in \Upsilon_n$, i.e.,
$$
(y^{-1}B_{2^n}')^{-1}\varsigma\cap \Phi_n\ne \emptyset.
$$
Then it follows from Proposition \ref{p:dual2} that for infinitely many $n$,
there exists $\gamma_n\in {\rm G}(O_{(V_K\backslash S)\cup S'})$ such that
$$
\height(\gamma_n)\le c_0 \sup_{b\in (UU^\prime)^{-1}B_{2^n}'} \height(b)\le c_0 c_1 2^n,
$$
and for $x=gx^0$ and $z_n=\gamma_n x^0$,
$$
\|x_v-z_n\|_v \le \epsilon_v^{(n)}\le \psi_v(c 2^{n-1})\quad \hbox{for all $v\in S'$.}
$$
We have  $z_n \in {\rm X}(O_{(V_K\backslash S)\cup S'})$ and $\height(z_n)\le c_2\height(\gamma_n)\le c
2^{n-1}$. Hence, since $\psi_v$ is monotone, we conclude that
$$
\|x_v-z_n\|_v \le \psi_v(\height(z_n))\quad \hbox{for all $v\in S'$}.
$$
Recall that $\psi_v(t)\rightarrow 0$ as $t\to \infty$ for at least one $v\in S'$.
If $x_v\notin {\rm X}(O_{(V_K\backslash S)\cup S'})$, then it follows that the set $\{z_n\}$
is infinite. Therefore, almost every element of $\Omega'x^0$ is in 
$\mathcal{W}(X_{S'}, X_{S,S'}\cap {\rm X}(K),\Psi)$.
This completes the proof of the theorem.

\section{Hausdorff dimension}\label{sec:h}

We start by recalling the notion of Hausdorff measure and dimension.
Let $(M,\text{dist})$ be a locally compact separable metric space. 
The $s$-Hausdorff measure $\cH^{s}$ is a Borel measure on $M$ defined by
$$
\cH^{s} (E) := \lim_{\rho \to 0^+} \cH^{s}_{\rho}(E),
$$
where
$$
\cH^{s}_{\rho}(E) := \inf \sum_{i} r(B_i)^s,
$$
and the infimum is taken over all countable covers of $E$ by closed balls $B_i$
such that each $B_i$ has radius at most $\rho$, and 
$r(B_i)$ denotes the radius of $B_i$.
The Hausdorff dimension of the set $E$ is defined by
$$
\dim(E):=\sup\{s:\, \cH^s(E)=\infty\}=\inf\{s:\, \cH^s(E)<\infty\}.
$$
We assume that for some $d,r_0>0$,
\begin{equation}\label{eq:hd}
r^d \ll \cH^{d}(B(x,r)) \ll r^d
\end{equation}
uniformly over all closed balls $B(x,r)$ with $r\leq r_0$.  
Then for every nontrivial closed ball $B$ and $s<d$, we have $\cH^{s}(B)=\infty$.
 
The following Mass Transfer Principle was proved in \cite{BV}:

\begin{Theorem}[\cite{BV}, Th.~3]\label{th:BV}
Let $\{B(x_i,r_i)\}_{i \in \N}$ be a
sequence of closed balls in $M$ with $r_i \to 0$ as $i \to \infty$. 
Suppose that for some $s\in (0,d)$ and every closed ball $C$ in $M$,
\begin{equation}\label{eq:hdd}
\mathcal{H}^{d}\left(C \cap \limsup B(x_i,r_i^{s/d})\right) = \mathcal{H}^{d}(C).
\end{equation}
Then for a closed ball $B$ in $M$,
\begin{equation}
\mathcal{H}^{s}\left(B \cap \limsup B(x_i, r_i)\right) = \mathcal{H}^{s}(B).
\end{equation}
\end{Theorem}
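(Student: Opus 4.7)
The plan is to establish the Mass Transfer Principle via the Cantor-construction and mass-distribution approach of Beresnevich--Velani. Fix a closed ball $B \subset M$ and abbreviate $E := \limsup B(x_i, r_i)$ and $E' := \limsup B(x_i, r_i^{s/d})$. Since $s < d$ and $r_i \to 0$, eventually $r_i^{s/d} \geq r_i$, so $E \subset E'$; by hypothesis $E'$ has full $\cH^d$-mass in every ball. The Ahlfors-type lower bound in \eqref{eq:hd} together with $s<d$ gives $\cH^s(B) = \infty$, so the goal reduces to showing $\cH^s(B \cap E) = \infty$.

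My intended route is the Mass Distribution Principle: it suffices to produce, for every prescribed $\Lambda > 0$, a positive Borel measure $\nu$ supported on $B \cap E$ with total mass $\nu(M) \geq \Lambda$ and satisfying the Frostman-type bound $\nu(B(x,\rho)) \leq K \rho^s$ for all $x \in M$, $\rho > 0$, with $K$ independent of $\Lambda$. This yields $\cH^s(B \cap E) \geq K^{-1}\Lambda$ and hence the theorem upon letting $\Lambda \to \infty$.

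To build $\nu$, I would run a Cantor-type construction. Choose $\rho_n \downarrow 0$ very rapidly and inductively select pairwise disjoint closed balls $\mathcal{F}_n = \{C^{(n)}_j\}_j$ of radii at most $\rho_n$, each contained in $B$ and in some $B(x_i, r_i)$ from the given sequence, together with positive weights $w(C)$. The inductive step inside a parent $C \in \mathcal{F}_n$ is the crucial use of the hypothesis: since $\cH^d(C \cap E') = \cH^d(C) \asymp r(C)^d$, one can select indices $i'$ with $r_{i'}$ arbitrarily small and $B(x_{i'}, r_{i'}^{s/d}) \subset C$. A $5r$-covering (Vitali) argument extracts a finite disjoint sub-family $\{B(x_{i'_k}, r_{i'_k}^{s/d})\}_k$ whose $5$-enlargements cover a positive proportion of $C$ in $\cH^d$-measure, and the Ahlfors regularity of $\cH^d$ yields
$$
\sum_k r_{i'_k}^s \;=\; \sum_k \bigl(r_{i'_k}^{s/d}\bigr)^d \;\asymp\; \sum_k \cH^d\!\left(B(x_{i'_k}, r_{i'_k}^{s/d})\right) \;\gtrsim\; \cH^d(C) \;\asymp\; r(C)^d.
$$
Take the children of $C$ to be the \emph{original} small balls $B(x_{i'_k}, r_{i'_k}) \subset B(x_{i'_k}, r_{i'_k}^{s/d}) \subset C$, and assign the weight $w(B(x_{i'_k}, r_{i'_k})) := w(C) \cdot r_{i'_k}^{s}/\sum_\ell r_{i'_\ell}^s$, so that total weight is preserved across generations. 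Let $\nu_n$ be the measure distributing the stage-$n$ weights uniformly over $\bigcup \mathcal{F}_n$, and let $\nu$ be a weak-$\ast$ subsequential limit; by construction $\operatorname{supp}(\nu) \subset \bigcap_n \overline{\bigcup \mathcal{F}_n} \subset B \cap E$, and the starting weight is scaled so that $\nu(M) = \Lambda$.

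The main obstacle, as usual for such arguments, is verifying the Frostman bound $\nu(B(x,\rho)) \leq K\rho^s$ uniformly in $x, \rho$. I would split into regimes based on how $\rho$ sits between two consecutive generation scales $\rho_{n+1} < \rho \leq \rho_n$. When $B(x,\rho)$ fits inside a neighbourhood of a single stage-$n$ ball $C$ with $\rho$ comparable to $r(C)$, the estimate reduces to $w(C) \lesssim r(C)^s$, which is maintained inductively because the comparison $\sum_k r_{i'_k}^s \gtrsim r(C)^d$ above forces the weight-per-$s$-dimensional-mass to stay bounded from generation to generation. When $B(x,\rho)$ straddles several siblings at stage $n+1$, disjointness of the enlarged balls $B(x_{i'_k}, r_{i'_k}^{s/d})$ combined with Ahlfors $d$-regularity bounds the number of siblings that can lie in $B(x,\rho)$, again yielding a $\rho^s$ bound after summation. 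The delicate accounting step is choosing $\rho_n$ to decrease quickly enough, and imposing an upper cutoff on the admissible $r_{i'}$ at each stage, so that the two regimes patch without loss; once this is done, the Mass Distribution Principle closes the argument.
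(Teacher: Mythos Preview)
The paper does not give its own proof of this theorem: it is quoted as \cite[Th.~3]{BV} and accompanied only by the remark that the proof proceeds by constructing a Cantor-like subset of $B\cap\limsup B(x_i,r_i)$ together with a measure of large dimension supported on it. Your proposal is exactly a sketch of that Beresnevich--Velani argument---Cantor construction, Vitali selection of disjoint enlarged balls using the full-$\cH^d$-measure hypothesis, weight distribution proportional to $r_{i'_k}^s$, and the Mass Distribution Principle via a Frostman bound---so it matches the approach the paper describes and defers to.
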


The proof of Theorem \ref{th:BV} is based on construction of a Cantor-like set in
$B \cap \limsup_{i\in \N} B(x_i, r_i)$ and a suitable measure supported on this set
with large dimension. The same proof still applies provided that 
\begin{enumerate}
\item[(i)] (\ref{eq:hd}) holds for all closed balls $B(x,r)$ with $r\le r_0$ such that $B\cap B(x,r)\ne\emptyset$,
\item[(ii)] (\ref{eq:hdd}) holds for all closed balls $C$ contained in the ball $B$.
\end{enumerate}

\vspace{0.2cm}

\subsection*{Proof of Theorem \ref{th:main3}}
In the proof we use notation as in \eqref{eq:notation},
and without loss of generality, we may assume that $\psi(t)\to 0$ as $t\to\infty$.

The assumption (\ref{eq:main4}) implies that 
the set $X_{S,S'}\cap {\rm X}(K)$, embedded in $X_{S'}$, is not discrete,
and by Lemma \ref{l:discrete}, its closure 
$\overline{X_{S,S'}\cap {\rm X}(K)}$ is open in $X_{S'}$. 

We consider the space $X_{S'}$ with the metric which is the product of local metrics \eqref{eq:dist}.
We cover ${\rm X}$ with a collection of Zariski open subsets ${\rm U}$ such that each ${\rm U}$
supports a non-vanishing regular differential form of top degree. 
Since the sets $U_{S'}$ form an open cover of $X_{S'}$, it is sufficient to show that
$$
\dim \left(B_0\cap \mathcal{W}(X_{S,S'}, X_{S,S'}\cap {\rm X}(K),\psi)\right)\ge \frac{2\mathfrak{a}_0}{\mathfrak{a}\mathfrak{q}}\alpha.
$$
for all nontrivial closed balls $B_0$ in $X_{S'}$ such that 
$B_0\subset U_{S'}\cap \overline{X_{S,S'}\cap {\rm X}(K)}$ for some $U_{S'}$. 

Let 
$$
\rho:=\text{dist}\left(B_0,(U_{S'}\cap \overline{X_{S,S'}\cap {\rm X}(K)})^c\right)>0,
$$
and 
$$
\tilde B_0:=\{x\in X_{S'}:\, \text{dist}(x, B_0)\le \rho/2\}\subset U_{S'}\cap \overline{X_{S,S'}\cap {\rm X}(K)}.
$$
Then every closed ball $B(x,r)$ such that $r\le \rho/4$ and
$B(x,r)\cap B_0\ne \emptyset$ satisfies $B(x,r)\subset \tilde B_0$.

Let $\lambda_{S'}$ be the measure on $U_{S'}$ defined by the nowhere-zero differential form.
Different choices of differential forms lead to equivalent measures.
Since ${\rm X}$ is a homogeneous variety, it is nonsingular, and computing in local coordinates,
we obtain that for all closed balls $B(x,r)\subset U_{S'}$,
$$
r^{d}\ll \lambda_{S'}(B(x,r))\ll r^{d},
$$
where $d:=(\sum_{v\in S'} r_v)\dim({\rm X})$.
This estimate is uniform over all closed balls $B(x,r)$ with $x\in \tilde B_0$ and bounded $r$.
Therefore, we conclude that
\begin{equation}\label{eq:hhd}
\mathcal{H}^{d} \ll \lambda_{S'} \ll \mathcal{H}^{d}
\end{equation}
uniformly over Borel subsets of $\tilde B_0$. In particular,
it follows that property (i) (stated after Theorem \ref{th:BV}) holds.

We apply Theorem \ref{th:BV} to 
the collection of closed ball $B(z,\psi(\height(z)))$ with $z\in X_{S,S'}\cap {\rm X}(K)$.
Choose any positive $s<\frac{2\mathfrak{a}_0}{\mathfrak{a}\mathfrak{q}}\alpha$ and set $\tilde\psi :=\psi^{s/d}$.
We have 
$$
\mathcal{W}(X_{S,S'}, X_{S,S'}\cap {\rm X}(K),\tilde \psi)=\limsup B(z,\tilde \psi(\height(z))).
$$
According to our assumption, for $\beta:=\frac{d}{s}\alpha>\frac{\mathfrak{a}\mathfrak{q}}{2\mathfrak{a}_0}d$,
$$
\sum_{z\in {\rm X}(K)\cap D} \tilde\psi_{S'}(\height(z))^\beta=\infty.
$$
Hence, by Theorem \ref{th:main2}, the set 
$\mathcal{W}(X_{S,S'}, X_{S,S'}\cap {\rm X}(K),\tilde \psi)$ has full measure in $X_{S'}$,
and in particular, it follows from \eqref{eq:hhd} that for every closed ball $C$ contained in $B_0$,
$$
\mathcal{H}^{d}\left(C \cap \limsup  B(z,\tilde\psi(\height(z)))\right) = \mathcal{H}^{d}(C).
$$
This verifies property (ii) (stated after Theorem \ref{th:BV}), and Theorem \ref{th:BV} now implies that
$$
\mathcal{H}^{s}\left(B_0 \cap \limsup B(z,\psi(\height(z)))\right) = \mathcal{H}^{s}(B_0)=\infty
$$
for every $s<\frac{2\mathfrak{a}_0}{\mathfrak{a}\mathfrak{q}}\alpha$. This proves that 
$$
\dim \left(B_0\cap \mathcal{W}(X_{S,S'}, X_{S,S'}\cap {\rm X}(K),\psi)\right)\ge \frac{2\mathfrak{a}_0}{\mathfrak{a}\mathfrak{q}}\alpha,
$$
as required.

\end{document}